\numberwithin{equation}{section}
\newcommand{\keywords}[1]{\textit{\quad Keywords:}#1}
\newtheorem{theorem}{Theorem}[section]
\newtheorem{lemma}[theorem]{Lemma}
\newtheorem{definition}[theorem]{Definition}
\newtheorem{remark}[theorem]{Remark}
\newcommand{\dd}{\mathrm{d}}
\newcommand{\re}{\mathrm{Re}}
\title{Stability of Poiseuille Flow of Navier-Stokes Equations on \(\mathbb{R}^2\)}
\author{Zhile Li \thanks{School of Mathematical Sciences, University of Science and Technology of China, Hefei, 230026, People's Republic of China. lizhile@mail.ustc.edu.cn}}
\date{}
\begin{document}
	
	\maketitle
	
	\begin{abstract}
		We consider solutions to the Navier–Stokes equations on \(\mathbb{R}^2\) close to the Poiseuille flow with viscosity \(0< \nu < 1\). For the linearized problem, we prove that when the \(x\)-frequency satisfies \(|k| \ge \nu^{-\frac{1}{3}}\), the perturbation decays on a time-scale proportional to \(\nu^{-\frac{1}{2}}|k|^{-\frac{1}{2}}\). Since it decays faster than the heat equation, this phenomenon is referred to as enhanced dissipation. Then we concern the non-linear equations. We show that if the initial perturbation \(\omega_{in}\) is at most of size \(\nu^\frac{7}{3}\) in an anisotropic Sobolev space, then the size of the perturbation remains no more than twice the size of its initial value.
	\end{abstract}

	\keywords{Navier–Stokes equations, Poiseuille flow, stability}
	
	\section{Introduction}
	
	Consider the incompressible Navier-Stokes equations on \(\mathbb{R}^2\)
	\begin{equation}
		\label{eq 1.1}
		\begin{cases*}
			\partial_t U + (U \cdot \nabla) U + \nabla P - \nu \Delta U = 0,\\
			\nabla \cdot U = 0,
		\end{cases*}
	\end{equation}
	where \(U = (U_1, U_2)\) is the velocity vector field, \(P\) is the scalar pressure, and the kinematic viscosity \(\nu > 0\) is inversely proportional to the Reynolds number. Defining \(\nabla^\bot = (-\partial_y, \partial_x)\) as the rotation of the gradient vector, then the vorticity \(\Omega \coloneqq \nabla^\bot U\) satisfies the active scalar equations
	\begin{equation}
		\label{eq 1.2}
		\begin{cases*}
			\partial_t \Omega + (U \cdot \nabla) \Omega - \nu \Delta \Omega = 0,\\
			U = \nabla^\bot \Psi, \Delta \Psi = \Omega,
		\end{cases*}
	\end{equation}
	where \(\Psi\) is the corresponding stream-function.
	
	The stability of \eqref{eq 1.1} is a prominent research topic in both mathematics and physics. As stated in \cite{ref5}, stability means that, given two Banach spaces \(X\) and \(Y\), we say that a solution 
	\(U_E\) of \eqref{eq 1.1} is quantitatively stable with exponent \(\gamma\) if \(\| U_{in} - U_E \|_X \ll \nu^\gamma\) implies \(\| U(t) - U_E \|_Y \ll 1\) for all \(t > 0\) and \(\| U(t) - U_E \|_Y \to 0\) as \(t \to \infty\).
	
	Enhanced dissipation is another important topic related to \eqref{eq 1.1}. Enhanced dissipation refers to the phenomenon in certain dynamical systems where the scalar field exhibits dissipation at a rate much faster than that of the heat equation. This behavior is associated with features like shear flow or rotational flow. Enhanced dissipation has been extensively studied in the physics literature \cite{ref9,ref14} and has received significant attention from the mathematics community. Quantitative questions have been studied in the context of passive scalars \cite{ref2,ref3,ref4}, the Navier-Stokes equations near the Couette flow \cite{ref7,ref18}, and Lamb–Oseen vortices \cite{ref16}. 
	For 2D linear passive scalars, when \(x\)-frequencies satisfy \(|k| \gg \nu\), the decay rate follows the form 
	\(\exp(-\nu^\frac{1}{3} |k|^\frac{2}{3} t)\) \cite{ref12}. In the setting of linearized periodic Couette flow, Kelvin identified enhanced dissipation with a dissipation time-scale of \(\nu^{-\frac{1}{3}}\) \cite{ref13}.
	
	There has been extensive research on the stability and enhanced dissipation near the so-called Poiseuille flow \cite{ref11,ref14}. The Poiseuille flow is given by
	\[U^P(x, y) = (y^2, 0), P^P(x, y) = 2 \nu x.\]
	This flow is important and one reason is that it is the simplest non-trivial example of a shear flow on \(\mathbb{R}^2\) besides the Couette flow \(U^C = (y,0)\).
	
	Our work also investigates the stability and enhanced dissipation near the Poiseuille flow. By writing \(U = (y^2, 0) + u\), with corresponding \(\Omega = -2y + \omega\), we can rewrite \eqref{eq 1.2} as
	\begin{equation}
		\label{eq 1.3}
		\begin{cases*}
			\partial_t \omega = - y^2 \partial_x \omega + 2 \partial_x \psi + \nu \Delta \omega - u \cdot \nabla \omega,\\
			u = \nabla^\bot \psi, \Delta \psi = \omega.
		\end{cases*}
	\end{equation}
	Here \(u, \omega, \psi\) are thought of as perturbations
	of the velocity, vorticity and stream-function near the Poiseuille flow. Studying the stability of the Poiseuille flow reduces to analyzing the solutions to \eqref{eq 1.3}.
	
	For the linear problem, we prove that when the \(x\)-frequency satisfies \(|k| \ge \nu^{-\frac{1}{3}}\), the perturbation decays on a time-scale proportional to \(\nu^{-\frac{1}{2}}|k|^{-\frac{1}{2}}\). In other words, enhanced dissipation exists. We will see this in detail in Section \ref{sec 2}. For the non-linear problem, we prove a quantitative stability threshold for the initial perturbation \(\omega_{in}\). Namely, we show that for suitable anisotropic Sobolev norms \(\| \cdot \|_X\) and \(\| \cdot \|_Y\), that if \(\| \omega_{in} \|_X \lesssim \nu^\frac{7}{3}\), then the corresponding solution \(\omega\) of \eqref{eq 1.3} satisfies \(\| \omega \|_Y \ll 1\) for all \(t > 0\) and \(\| \omega \|_Y \to 0\) as \(t \to \infty\). Before presenting the main results, we first introduce the following notations.
	
	Given a function \(f(t,x,y)\), we denote its Fourier transform only in \(x\) by
	\[f_k(t,y) = \frac{1}{2\pi}\int_\mathbb{R} f(t,x,y) e^{-ikx} \dd x.\]
	We denote the corresponding Hamiltonian and Laplacian by \(\nabla_k \coloneqq (ik, \partial_y)\) and \(\Delta_k \coloneqq \partial_y^2 - |k|^2\). We use the standard notation \(\langle \cdot \rangle \coloneqq \sqrt{1 + \cdot ^2}\). Given two quantities \(A, B \ge 0\), we write \(A \lesssim B\) to indicate that there exists a constant \(C > 0\) such that \(A \le C B\). If \(A \lesssim B\) and \(B \lesssim A\), we write \(A \approx B\).
	
	Our main result is that
	\begin{theorem}
		\label{thm 1.1}
		Suppose \(\omega_{in}\) is initial datum for \eqref{eq 1.3}. Then for all \(J \in [1, +\infty), m \in (\frac{3}{4}, 1)\), there exists a constant \(\delta > 0\) independent of \(\nu\) such that if
		\begin{align*}
			\epsilon \coloneqq& \| \langle \partial_x \rangle^m \omega_{in} \|_{L_{x,y}^2} + \| \nu^\frac{1}{3} \langle \partial_x \rangle^m \langle \nu^\frac{1}{3} \partial_x \rangle^{-\frac{1}{4}} \nabla \omega_{in} \|_{L_{x,y}^2} + \| \nu^{-\frac{1}{3}} \langle \partial_x \rangle^m \langle \nu^\frac{1}{3} \partial_x \rangle^\frac{1}{4} y \omega_{in} \|_{L_{x,y}^2}\\
			&+ \| \nu^{-\frac{1}{3}} \langle \partial_x \rangle^m \langle \nu^\frac{1}{3} \partial_x \rangle^\frac{1}{4} \partial_y \Delta^{-1} \omega_{in} \|_{L_{x,y}^2} + \| y \omega_{in,k} \|_{L_k^\infty L_y^2} + \| \nabla_k \Delta_k^{-1} \omega_{in,k} \|_{L_k^\infty L_y^2} \le \delta \nu^\frac{7}{3},
		\end{align*}
		then for all \(c > 0\) sufficiently small (independent of \(\nu\) and \(\delta\)) and all \(\nu \in (0, 1)\), the corresponding solution \(\omega\) to \eqref{eq 1.3} satisfies
		\begin{align*}
			&\| \langle c \lambda_\nu^{pl}(\partial_x) t \rangle^J \langle \partial_x \rangle^m \omega \|_{L_{x,y}^2} + \| \nu^\frac{1}{3} \langle c \lambda_\nu^{pl}(\partial_x) t \rangle^J \langle \partial_x \rangle^m \langle \nu^\frac{1}{3} \partial_x \rangle^{-\frac{1}{4}} \nabla \omega \|_{L_{x,y}^2} + \| \nu^{-\frac{1}{3}} \langle c \lambda_\nu^{pl}(\partial_x) t \rangle^J \langle \partial_x \rangle^m \langle \nu^\frac{1}{3} \partial_x \rangle^\frac{1}{4} y \omega \|_{L_{x,y}^2}\\
			&+ \| \nu^{-\frac{1}{3}} \langle c \lambda_\nu^{pl}(\partial_x) t \rangle^J \langle \partial_x \rangle^m \langle \nu^\frac{1}{3} \partial_x \rangle^\frac{1}{4} \partial_y \Delta^{-1} \omega \|_{L_{x,y}^2} + \| y \omega_k \|_{L_k^\infty L_y^2} + \| \nabla_k \Delta_k^{-1} \omega_k \|_{L_k^\infty L_y^2} \le 2 \epsilon.
		\end{align*}
		Here \(\lambda_\nu^{pl}(\partial_x)\) is a Fourier multiplier defined on the Fourier side as
		\[\lambda_\nu^{pl}(k) =
		\begin{cases*}
			 \nu^\frac{1}{2} |k|^\frac{1}{2}, |k| \ge \nu^{-\frac{1}{3}},\\
			 \nu |k|^2, |k| < \nu^{-\frac{1}{3}}.
		\end{cases*}
		\]
		
	\end{theorem}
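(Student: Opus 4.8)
\emph{Set-up.} The plan is to run a continuity (bootstrap) argument mode-by-mode in the \(x\)-frequency. Taking the Fourier transform in \(x\), equation \eqref{eq 1.3} becomes, for each \(k\),
\[ \partial_t \omega_k = \mathcal{L}_k \omega_k - (u\cdot\nabla\omega)_k, \qquad \mathcal{L}_k \coloneqq \nu\Delta_k - i k y^2 + 2 i k \Delta_k^{-1}, \]
where \(u\cdot\nabla\omega = -\partial_y\psi\,\partial_x\omega + \partial_x\psi\,\partial_y\omega\) and \(\psi_k = \Delta_k^{-1}\omega_k\). Let \(E(t)\) denote the quantity on the left-hand side of the asserted inequality, evaluated at time \(t\); since \(\lambda_\nu^{pl}(k)\cdot 0 = 0\), we have \(E(0)=\epsilon\). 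By local well-posedness for \eqref{eq 1.3} with data in the stated anisotropic space and continuity in \(t\), the set \(\{T>0 : E(t)\le 2\epsilon \text{ on }[0,T]\}\) is a nonempty interval; call its supremum \(T^\ast\). We will show that on \([0,T^\ast]\) one has \(E(t)\le \epsilon + C\delta\,\epsilon\), so that \(E(t)\le \tfrac{3}{2}\epsilon\) once \(\delta\) is small, which is incompatible with \(T^\ast<\infty\); this proves the theorem.

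\emph{Linear step.} Invoking the enhanced-dissipation analysis of Section \ref{sec 2}, we have for each \(k\) a semigroup bound of the form \(\|e^{t\mathcal{L}_k} f\| \lesssim e^{-c_0\lambda_\nu^{pl}(k)t}\|f\|\) in each of the weighted norms appearing in \(E\), together with the companion estimates propagating \(\nabla\omega_k\), \(y\omega_k\) and \(\nabla_k\Delta_k^{-1}\omega_k\) with the weights \(\nu^{\pm 1/3}\langle \nu^{1/3}k\rangle^{\mp 1/4}\) --- weights calibrated to the critical layer of width \((\nu/|k|)^{1/4}\) near \(y=0\) and to the transition frequency \(|k|\sim\nu^{-1/3}\) at which \(\lambda_\nu^{pl}\) changes form. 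Since \(\langle c\lambda_\nu^{pl}(k)t\rangle^J e^{-c_0\lambda_\nu^{pl}(k)t}\lesssim_J 1\) as soon as \(c\ll c_0\), every polynomially time-weighted norm of the bare linear evolution \(e^{t\mathcal{L}_k}\omega_{in,k}\) is bounded by the corresponding initial norm, hence by \(\epsilon\); the two \(L^\infty_k L^2_y\) norms carry no time weight, consistently with the fact that the \(x\)-average (\(k=0\)) mode merely satisfies the heat equation and exhibits no enhanced dissipation, and these are propagated as well. The delicate point is the non-selfadjoint term \(2 i k\Delta_k^{-1}\), the feature distinguishing Poiseuille from Couette, where the analogue would vanish: carrying the velocity-type norms \(\|\nabla_k\Delta_k^{-1}\omega_k\|\) and \(\|y\omega_k\|\) alongside \(\|\omega_k\|\) closes the cross terms it generates in the hypocoercive functional, and one uses in addition that \(2 i k\Delta_k^{-1}\) has operator norm \(\lesssim |k|^{-1}\) on \(L^2_y\), which is harmless at high \(k\).

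\emph{Nonlinear step.} In the transport nonlinearity \(u\cdot\nabla\omega\) the Fourier transform in \(x\) turns each product into a convolution in \(k\); we split it into a piece in which the velocity factor \(u\) (resp. \(\partial_y\psi,\partial_x\psi\)) carries the lower frequency and its symmetric counterpart. In the low-frequency factor we estimate \(u\) in \(L^1_k L^\infty_y\) using \(\|u_k\|_{L^\infty_y}\lesssim \|u_k\|_{L^2_y}^{1/2}\|\partial_y u_k\|_{L^2_y}^{1/2}\) together with the \(\langle\partial_x\rangle^m\) weight, whose \(L^1_k\)-summability survives the extra \(\langle\nu^{1/3}\partial_x\rangle^{1/4}\)-weight and the half-derivative lost to the one-dimensional Sobolev embedding exactly when \(m>\tfrac{3}{4}\) (the upper bound \(m<1\) then covering the commutator bookkeeping); in the high-frequency factor we keep \(\nabla\omega\) in the weighted \(L^2_k L^2_y\) norm. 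This is precisely where the fifth and sixth terms of \(E\) are consumed. One then counts the powers of \(\nu\): the weighted \(\nabla\omega\)-norm costs \(\nu^{-1/3}\); the Duhamel time integral \(\int_0^t e^{-c_0\lambda_\nu^{pl}(k)(t-s)}\,\dd s\lesssim \lambda_\nu^{pl}(k)^{-1}\) costs at worst a further \(\nu^{-1/3}\) (the supremum of \(\lambda_\nu^{pl}(k)^{-1}\) over the admissible range being \(\approx \nu^{-1/3}\), attained near \(|k|\sim\nu^{-1/3}\) where \(\lambda_\nu^{pl}=\nu^{1/3}\)); and redistributing the time weight \(\langle c\lambda_\nu^{pl}(\partial_x)t\rangle^J\) and the \(\langle\nu^{1/3}\partial_x\rangle^{\pm 1/4}\) weights across the two interacting modes accounts for the remaining powers, so that the nonlinear contribution to \(E(t)\) is \(\lesssim \nu^{-7/3}\big(\sup_{[0,t]}E\big)^2\). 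With the bootstrap hypothesis \(\sup_{[0,t]}E\le 2\epsilon\le 2\delta\nu^{7/3}\) this is \(\lesssim \delta\,\epsilon\), closing the estimate and hence the continuity argument.

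\emph{Main obstacle.} Granting Section \ref{sec 2}, the crux is the nonlinear step, and within it the redistribution of the time weight \(\langle c\lambda_\nu^{pl}(\partial_x)t\rangle^J\) over a nonlinear interaction of modes \(j\) and \(k-j\): when \(|j|\) is small while \(|k-j|\approx|k|\) is large, the low mode \(j\) carries essentially no enhanced-dissipation decay, so one cannot simply split \(\langle\lambda_\nu^{pl}(k)t\rangle^J\) as \(\langle\lambda_\nu^{pl}(j)t\rangle^J\langle\lambda_\nu^{pl}(k-j)t\rangle^J\). The remedy is to exploit that such low-high interactions always come with an extra \(y\)- or \(\partial_y\)-factor --- which is why the norms tracking \(y\omega\), \(\partial_y\Delta^{-1}\omega\), \(\nabla_k\Delta_k^{-1}\omega_k\) and \(y\omega_k\) are built into \(E\) --- and that the weights \(\langle\nu^{1/3}\partial_x\rangle^{\pm 1/4}\) together with the \(\nu^{\pm 1/3}\) prefactors are tuned to balance exactly these frequency exchanges; this balancing is what fixes the stability threshold at \(\nu^{7/3}\). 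A secondary but routine difficulty is controlling the commutators produced when the multipliers \(\langle\partial_x\rangle^m\), the weight \(y\), and \(\nabla_k\Delta_k^{-1}\) are moved through \(\mathcal{L}_k\) and through the nonlinearity.
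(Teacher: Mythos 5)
Your outline replaces the paper's mechanism with a Duhamel/semigroup bootstrap, and the step on which your power counting hinges is precisely the one that fails. You claim the Duhamel integral costs at worst \(\int_0^t e^{-c_0\lambda_\nu^{pl}(k)(t-s)}\,\dd s \lesssim \lambda_\nu^{pl}(k)^{-1}\) with ``the supremum of \(\lambda_\nu^{pl}(k)^{-1}\) over the admissible range being \(\approx \nu^{-1/3}\)''. That is false on \(\mathbb{R}^2\): since \(x\in\mathbb{R}\) there is no frequency gap, \(\lambda_\nu^{pl}(k)=\nu|k|^2\to 0\) as \(k\to 0\), so \(\sup_k \lambda_\nu^{pl}(k)^{-1}=\infty\) (the value \(\nu^{-1/3}\) is the \emph{minimum} of \(\lambda^{-1}\) over \(|k|\ge\nu^{-1/3}\), not a bound over all \(k\)). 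Consequently your claimed nonlinear bound ``\(\lesssim \nu^{-7/3}(\sup_{[0,t]}E)^2\)'' is not justified: if both factors of the quadratic term are only controlled through the sup-in-time energy, the time integral at low \(x\)-frequencies has no integrable decay and grows in \(t\), and the same problem appears for the unweighted \(L^\infty_k L^2_y\) components, where the linear flow gives no decay at all at fixed small \(k\). This is exactly the difficulty the paper's proof is built to avoid: there the nonlinearity is never closed against \(\mathcal{E}^2\) but against the \emph{time-integrated dissipation}, i.e. \(|\mathbb{NL}|\lesssim \nu^{-7/6}\,\mathcal{D}(t)\,\sup_{s}\mathcal{E}(s)^{1/2}\), so that for \(\mathcal{E}\lesssim \nu^{7/3}\) the whole nonlinear contribution is absorbed into the good term \(-4c\,\mathcal{D}(t)\) in the energy inequality \eqref{eq 3.2}; time integrability at low \(k\) is recovered from \(L^2_t\) dissipation norms (e.g.\ Lemma \ref{lem 3.5}, which integrates \(\nu^{-1/2}|k|^{-1/2}\) over \(|k|<\nu^{-1/3}\), and the \(\sup_k\)-dissipation \(\mathcal{D}_2\) used for \(T_7,T_8\)), not from semigroup decay. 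Without replacing your \((\sup E)^2\) bound by a dissipation-based one (or otherwise supplying uniform-in-\(k\) time integrability near \(k=0\)), the bootstrap does not close, and the \(\nu^{7/3}\) threshold cannot be extracted from the argument as written.

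Two smaller points. First, your linear step presumes semigroup bounds ``in each of the weighted norms'' including the polynomially time-weighted ones; the paper instead proves a Lyapunov inequality \(\frac{\dd E_k}{\dd t}\le -4cD_k-4c\lambda_k E_k\) for a hypocoercive functional \(E_k\) (with the stream-function terms handling the nonlocal \(2ik\Delta_k^{-1}\)), and then builds the time weight \(\langle c\lambda_k t\rangle^{2J}/M_k(t)\) directly into the energy \(\mathcal{E}_1\), using the auxiliary bounded multiplier \(M_k\) to absorb the terms where \(\partial_t\) hits the weight; if you keep the Duhamel formulation you would still need to prove the weighted resolvent/semigroup estimates you invoke. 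Second, even accepting your arithmetic, note that your threshold bookkeeping (nonlinear term \(\lesssim\nu^{-7/3}\epsilon^2\le\delta\epsilon\)) only works because the paper-level structure \(\mathbb{NL}\lesssim\nu^{-7/6}\mathcal{D}\,\mathcal{E}^{1/2}\) with \(\mathcal{E}\approx\epsilon^2\) has already been hidden in the unproved claim; the genuinely hard content of Section \ref{sec 3} --- the frequency decompositions of \(T_1\)--\(T_8\), the Gagliardo--Nirenberg/Young/Minkowski estimates, and the distinct treatment of the \(|k|\gtrless\nu^{-1/3}\) regimes --- is what produces that inequality and is absent from the proposal.
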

	
	Most previous results about stability have domains such as \(\mathbb{T} \times \mathbb{R}\) \cite{ref8,ref11,ref15,ref17} and \(\mathbb{T} \times [-1,1]\) \cite{ref6,ref10}. Whereas \cite{ref1} provided the stability results where the \(x\) variable is unbounded and non-periodic. The idea of our paper is just from \cite{ref1}. We shall prove Theorem \ref{thm 1.1} using an energy method similar to that used in proving Theorem 1.1 in \cite{ref1}. Unlike \cite{ref1}, our expression for \(\partial_t \omega\) includes a term \(\partial_x \psi\) related to the stream-function \(\psi\). Consequently, our energy must also include terms involving \(\psi\), making our energy formulation more complex. Fortunately, our \(\partial_t \omega\) contains the term \(y^2 \partial_x \omega\) instead of \(y \partial_x \omega\), so we do not need to introduce the operator \(\mathcal{J}_k\) as in \cite{ref1}, which simplifies our nonlinear estimates.
	
	The remainder of our paper is organized as follows. In Section \ref{sec 2}, we consider the linear equation and prove its stability. In Section \ref{sec 3}, we estimate the nonlinear term and ultimately prove Theorem \ref{thm 1.1}.
	
	\section{Linearized Problem}
	\label{sec 2}
	
	Taking the Fourier transform in \(x\) in \eqref{eq 1.3}, we obtain that
	\begin{equation}
		\label{eq 2.1}
		\begin{cases*}
			\partial_t \omega_k = - iky^2 \omega_k + 2ik \psi_k + \nu \Delta_k \omega_k - (\nabla^\bot \psi \cdot \nabla \omega)_k,\\
			\Delta_k \psi_k = \omega_k.
		\end{cases*}
	\end{equation}
	By removing the non-linear terms from \eqref{eq 2.1}, we get the following linear equations
	\begin{equation}
		\label{eq 2.2}
		\begin{cases*}
			\partial_t \omega_k = - iky^2 \omega_k + 2ik \psi_k + \nu \Delta_k \omega_k,\\
			\Delta_k \psi_k = \omega_k.
		\end{cases*}
	\end{equation}
	
	The main result for the linear problem is that
	\begin{theorem}
		\label{thm 2.1}
		We define the energy \(E_k\) that depends on \(k\)
		\[E_k = \frac{1}{2} \| \omega_k \|_2^2 + \frac{1}{2} c_\alpha \alpha_k \| \nabla_k \omega_k \|_2^2 + 2 c_\beta \beta_k \re \langle iky \omega_k, \partial_y \omega_k \rangle + \frac{1}{2} c_\gamma \gamma_k (\| y \omega_k \|_2^2 + 2 \| \nabla_k \psi_k \|_2^2),\]
		and the corresponding dissipation \(D_k\)
		\[D_k = c_\gamma \gamma_k \nu \| \omega_k \|_2^2 + \nu \| \nabla_k \omega_k \|_2^2 + c_\alpha \alpha_k \nu \| \Delta_k \omega_k \|_2^2 + 4 c_\beta \beta_k |k|^2 \| y \omega_k \|_2^2 + c_\gamma \gamma_k \nu \| y \nabla_k \omega_k \|_2^2 + 8 c_\beta \beta_k |k|^2 \| \partial_y \psi_k \|_2^2,\]
		where
		\[\alpha_k =
		\begin{cases*}
			\nu^\frac{1}{2}|k|^{-\frac{1}{2}}, |k| \ge \nu^{-\frac{1}{3}},\\
			\nu^\frac{2}{3}, |k| < \nu^{-\frac{1}{3}},
		\end{cases*}
		\beta_k =
		\begin{cases*}
			|k|^{-1}, |k| \ge \nu^{-\frac{1}{3}},\\
			\nu^\frac{1}{3}, |k| < \nu^{-\frac{1}{3}},
		\end{cases*}
		\gamma_k =
		\begin{cases*}
			\nu^{-\frac{1}{2}}|k|^\frac{1}{2}, |k| \ge \nu^{-\frac{1}{3}},\\
			\nu^{-\frac{2}{3}}, |k| < \nu^{-\frac{1}{3}},
		\end{cases*}
		\]
		and constants \(c_\alpha, c_\beta, c_\gamma\) satisfy
		\[c_\beta - c_\alpha^2 > 0, c_\gamma - \frac{8 c_\beta^2}{c_\alpha} > 0.\]
		Then
		\begin{equation}
			\label{eq 2.3}
			E_k \approx \| \omega_k \|_2^2 + \alpha_k \| \nabla_k \omega_k \|_2^2 + \gamma_k \| y \omega_k \|_2^2 + \gamma_k \| \partial_y \psi_k \|_2^2.
		\end{equation}
		If \(\omega_k, \psi_k\) solve \eqref{eq 2.2}, then there exists \(c > 0\) sufficiently small such that
		\begin{equation}
			\label{eq 2.4}
			\frac{\dd E_k}{\dd t} \le -4cD_k -4c \lambda_k E_k,
		\end{equation}
		where
		\[
		\lambda_k =
		\begin{cases*}
			\nu^\frac{1}{2} |k|^\frac{1}{2}, |k| \ge \nu^{-\frac{1}{3}},\\
			\nu |k|^2, |k| < \nu^{-\frac{1}{3}}.
		\end{cases*}
		\]
	\end{theorem}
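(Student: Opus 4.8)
The plan is to carry out an energy estimate directly on the linearized system \eqref{eq 2.2}, differentiating each of the four building blocks of $E_k$ in time and assembling the resulting identities into the differential inequality \eqref{eq 2.4}. The norm-equivalence \eqref{eq 2.3} should come first and essentially for free: the cross term $2 c_\beta \beta_k \re \langle iky\omega_k, \partial_y\omega_k\rangle$ is bounded by $c_\beta\beta_k \|y\omega_k\|_2 \|\partial_y\omega_k\|_2$, and since $\beta_k^2 \lesssim \alpha_k\gamma_k$ (indeed $\beta_k^2 = \alpha_k\gamma_k$ in the low-frequency regime and $\beta_k^2 = \alpha_k \gamma_k$ on $|k|\ge \nu^{-1/3}$ as well, up to the constants), a weighted Young's inequality absorbs it into $\tfrac12 c_\alpha\alpha_k\|\nabla_k\omega_k\|_2^2 + \tfrac12 c_\gamma\gamma_k\|y\omega_k\|_2^2$ provided $c_\gamma - 8c_\beta^2/c_\alpha > 0$; this is exactly why that constant condition is imposed. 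One also uses $\|\nabla_k\psi_k\|_2^2 = -\re\langle \omega_k, \psi_k\rangle \lesssim \|\nabla_k\psi_k\|_2\|\nabla_k^{-1}\omega_k\|_2$ together with $\|\partial_y\psi_k\|_2 \le \|\nabla_k\psi_k\|_2$ to see the $\psi$-terms are comparable to $\|\partial_y\psi_k\|_2^2$ after using the Poincaré-type bound $\|\nabla_k\psi_k\|_2 \lesssim |k|^{-1}\|\omega_k\|_2$.

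For the dissipation inequality, I would compute the four time derivatives separately. The $\tfrac12\|\omega_k\|_2^2$ term gives $-\nu\|\nabla_k\omega_k\|_2^2 + 2k\,\im\langle\psi_k,\omega_k\rangle$, where the $-iky^2$ transport term and $2ik\psi_k$ term contribute the imaginary part, and $\im\langle \psi_k,\omega_k\rangle = \im\langle\psi_k,\Delta_k\psi_k\rangle = 0$, so this block is clean. The term $\tfrac12 c_\alpha\alpha_k\|\nabla_k\omega_k\|_2^2$ produces the good $-c_\alpha\alpha_k\nu\|\Delta_k\omega_k\|_2^2$ along with a commutator $[\nabla_k, y^2\partial_x]$ that generates a term like $c_\alpha\alpha_k |k|\,|\langle y\omega_k,\partial_y\omega_k\rangle|$ — this is the term that the cross-term evolution is designed to cancel. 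The cross term $2c_\beta\beta_k\re\langle iky\omega_k,\partial_y\omega_k\rangle$ is the workhorse: its derivative yields the crucial $-c_\beta\beta_k|k|^2\|y\omega_k\|_2^2$ (this is where enhanced dissipation is born, since this is a zeroth-order-in-$\nu$ damping once the weight $y$ is present) plus dissipative error terms controllable by $\nu\|\nabla_k\omega_k\|_2^2$ and $\nu\|y\nabla_k\omega_k\|_2^2$. Finally $\tfrac12 c_\gamma\gamma_k(\|y\omega_k\|_2^2 + 2\|\nabla_k\psi_k\|_2^2)$ gives $-c_\gamma\gamma_k\nu\|y\nabla_k\omega_k\|_2^2 - c_\gamma\gamma_k\nu\|\omega_k\|_2^2$ plus terms from the $2ik\psi_k$ forcing paired against $y^2\omega_k$ and against $\psi_k$; the latter $\|\nabla_k\psi_k\|_2$ part is where the $\|\partial_y\psi_k\|_2^2$ dissipation and the $y\partial_y\psi_k$ cross-structure enter.

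After summing, one collects all error terms and shows each is dominated by a small multiple of $D_k$ using (i) the choice of $\alpha_k,\beta_k,\gamma_k$, which are arranged precisely so that $\alpha_k\gamma_k = \beta_k^2$ and $\nu\gamma_k = \lambda_k \cdot(\text{const})$, $|k|^2\beta_k = \lambda_k$, $\nu/\alpha_k = \lambda_k\cdot(\text{const})$ in each regime — so every prefactor matches $\lambda_k$; (ii) interpolation inequalities such as $\|\partial_y\omega_k\|_2^2 \lesssim \|\omega_k\|_2\|\Delta_k\omega_k\|_2$ and $\|y\omega_k\|_2^2 \le \|\omega_k\|_2\|y^2\omega_k\|_2$ to trade weighted norms against $D_k$'s constituents; and (iii) smallness of $c$. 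The second half $-4c\lambda_k E_k$ then follows by noting that $D_k \gtrsim \lambda_k E_k$ term-by-term in each frequency band: e.g. $\nu\|\nabla_k\omega_k\|_2^2 \ge \lambda_k \alpha_k\|\nabla_k\omega_k\|_2^2$ since $\nu/\alpha_k = \lambda_k$, $c_\beta\beta_k|k|^2\|y\omega_k\|_2^2 = c_\beta\lambda_k\|y\omega_k\|_2^2 \gtrsim \lambda_k\gamma_k\|y\omega_k\|_2^2$ requires checking $\beta_k|k|^2 \gtrsim \lambda_k\gamma_k$, which holds with room to spare, and similarly for the $\psi$ and $\|\omega_k\|_2^2$ pieces. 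The main obstacle I anticipate is the careful bookkeeping in the cross-term derivative — ensuring the commutator errors from both $\tfrac12 c_\alpha\alpha_k\|\nabla_k\omega_k\|_2^2$ and the cross term combine with the right signs and that the dangerous term $|k|\,\|y\partial_y\omega_k\|_2\|\partial_y\omega_k\|_2$ is genuinely absorbable by $c_\gamma\gamma_k\nu\|y\nabla_k\omega_k\|_2^2 + \nu\|\nabla_k\omega_k\|_2^2$ given the size relations; this is where the two-regime definition of the coefficients and the threshold $|k| = \nu^{-1/3}$ are really being used, and getting the constants $c_\alpha, c_\beta, c_\gamma, c$ to close simultaneously is the delicate point.
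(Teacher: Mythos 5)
Your overall strategy is the same as the paper's: differentiate each constituent of \(E_k\) along \eqref{eq 2.2}, let the cross term generate the \(|k|^2\|y\omega_k\|_2^2\) (and \(|k|^2\|\partial_y\psi_k\|_2^2\)) damping, absorb the two remaining error terms by weighted Young inequalities using \(c_\beta>c_\alpha^2\) and \(c_\gamma>8c_\beta^2/c_\alpha\), and finally check \(\lambda_k E_k\lesssim D_k\) regime by regime. However, two of your steps would not go through as written. First, your justification of the equivalence \eqref{eq 2.3} on the stream-function side fails: the claim that the \(\psi\)-terms are comparable to \(\|\partial_y\psi_k\|_2^2\) alone is false, and the Poincar\'e-type route \(\|k\psi_k\|_2^2\le\|\nabla_k\psi_k\|_2^2\lesssim |k|^{-2}\|\omega_k\|_2^2\) carries the weight \(\gamma_k|k|^{-2}\), which blows up as \(|k|\to 0\) in the regime \(|k|<\nu^{-\frac13}\), so it cannot bound \(\gamma_k\|k\psi_k\|_2^2\) by the right-hand side of \eqref{eq 2.3} uniformly in \(k\). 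The paper instead uses the weighted integration-by-parts identity \(\re\langle\partial_y\psi_k,y\omega_k\rangle=\tfrac12\|k\psi_k\|_2^2-\tfrac12\|\partial_y\psi_k\|_2^2\), hence \(\|k\psi_k\|_2^2\le 2\|\partial_y\psi_k\|_2^2+\|y\omega_k\|_2^2\); this is precisely why \(\|y\omega_k\|_2^2\) appears on the right of \eqref{eq 2.3}. Relatedly, in your Cauchy--Schwarz for the cross term you dropped the factor \(|k|\): the relation actually needed is \(\beta_k^2|k|^2\le\alpha_k\gamma_k\), whereas the identity \(\beta_k^2=\alpha_k\gamma_k\) you invoke is false in both regimes.

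Second, you mislocate the source of the \(\|\partial_y\psi_k\|_2^2\) dissipation and omit the cancellation that makes the \(\gamma\)-block close. The term \(8c_\beta\beta_k|k|^2\|\partial_y\psi_k\|_2^2\) in \(D_k\) arises from the time derivative of the cross term \(\re\langle iky\omega_k,\partial_y\omega_k\rangle\), i.e.\ \eqref{eq 2.7}, not from the \(\gamma\)-block. In the \(\gamma\)-block the essential point is that \(\frac{\dd}{\dd t}\|y\omega_k\|_2^2\) and \(2\frac{\dd}{\dd t}\|\nabla_k\psi_k\|_2^2\) produce \(-8\re\langle iky\psi_k,\partial_y\psi_k\rangle\) and \(+8\re\langle iky\psi_k,\partial_y\psi_k\rangle\), which cancel exactly (see \eqref{eq 2.8}--\eqref{eq 2.9}); this is the reason for the coefficient \(2\) in front of \(\|\nabla_k\psi_k\|_2^2\) in \(E_k\). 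Treating these as generic error terms, as your sketch suggests, would require control of \(\|y\psi_k\|_2\), which appears in neither \(E_k\) nor \(D_k\), so the scheme would not close without the cancellation. Likewise, the interpolation \(\|y\omega_k\|_2^2\le\|\omega_k\|_2\|y^2\omega_k\|_2\) you propose involves \(\|y^2\omega_k\|_2\), which is also unavailable; no such interpolation is needed, since all absorptions are direct weighted Young inequalities, after which the inequality \(\lambda_k E_k\lesssim I\) is verified exactly as you indicate in the high-frequency regime, and via \(\|k\nabla_k\omega_k\|_2\lesssim\|\Delta_k\omega_k\|_2\) and \(\|k\partial_y\psi_k\|_2\lesssim\|\omega_k\|_2\) in the low-frequency regime.
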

	
	\begin{remark}
		Applying the Gronwall inequality to \eqref{eq 2.4}, we obtain
		\[E_k(t) \le e^{-4c \lambda_k t} E_k(0).\]
		When \(|k| \ge \nu^{-\frac{1}{3}}\), the dissipation rate of \(E_k\) is faster than that of the heat equation's \(e^{-\nu t}\), meaning that enhanced dissipation exists.
	\end{remark}
	
	Taking the time derivative of each term in \(E_k\), we have the following lemma
	
	\begin{lemma}
		\label{lem 2.3}
		If \(\omega_k, \psi_k\) solve \eqref{eq 2.2}, then the following equalities hold.
		\begin{equation}
			\label{eq 2.5}
			\frac{\dd}{\dd t} \| \omega_k \|_2^2 = -2\nu \| \nabla_k \omega_k \|_2^2,
		\end{equation}
		\begin{equation}
			\label{eq 2.6}
			\frac{\dd}{\dd t} \| \nabla_k \omega_k \|_2^2 = - 2\nu \| \Delta_k \omega_k \|_2^2 - 4 \re \langle iky \omega_k, \partial_y \omega_k \rangle,
		\end{equation}
		\begin{equation}
			\label{eq 2.7}
			\frac{\dd}{\dd t} \re \langle iky \omega_k, \partial_y \omega_k \rangle = -2|k|^2 \| y \omega_k \|_2^2 - 4|k|^2 \| \partial_y \psi_k \|_2^2 - 2\nu \re \langle \Delta_k \omega_k, iky \partial_y \omega_k \rangle,
		\end{equation}
		\begin{equation}
			\label{eq 2.8}
			\frac{\dd}{\dd t} \| y \omega_k \|_2^2 = 2\nu \| \omega_k \|_2^2 -2\nu \| y \nabla_k \omega_k \|_2^2 - 8 \re \langle iky \psi_k, \partial_y \psi_k \rangle,
		\end{equation}
		\begin{equation}
			\label{eq 2.9}
			\frac{\dd}{\dd t} \| \nabla_k \psi_k \|_2^2 = -2\nu \| \omega_k \|_2^2 + 4 \re \langle iky \psi_k, \partial_y \psi_k \rangle.
		\end{equation}
	\end{lemma}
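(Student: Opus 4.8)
\textbf{Proof proposal for Lemma \ref{lem 2.3}.}
The plan is to prove all five identities by the same recipe: differentiate the relevant quadratic functional of $\omega_k$ (or $\psi_k$) in $t$, substitute the linear equation \eqref{eq 2.2} for $\partial_t\omega_k$ (and, whenever $\psi_k$ appears, use $\Delta_k\partial_t\psi_k=\partial_t\omega_k$ together with the self-adjointness of $\Delta_k$), integrate by parts in $y$, and discard every pairing that is purely imaginary. The last step is systematic because $k$ and $y$ are real: a factor $ik$ or $iky^n$ in front of a real integral drops out under $\re$, and integrals such as $\int_{\mathbb R} y^n|\omega_k|^2\,\dd y$, $\langle\omega_k,\psi_k\rangle=-\|\nabla_k\psi_k\|_2^2$ and $\langle\Delta_k\omega_k,\omega_k\rangle=-\|\nabla_k\omega_k\|_2^2$ are all real. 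I will also use the elementary identities $\langle ikyf,g\rangle=-\langle f,ikyg\rangle$, $\re\langle ikyf,g\rangle=-\re\langle ikyg,f\rangle$, and $\re\!\int_{\mathbb R} y^n\,\overline{f}\,\partial_y f\,\dd y=\tfrac12\!\int y^n\partial_y|f|^2\,\dd y=-\tfrac n2\!\int y^{n-1}|f|^2\,\dd y$. Throughout I work with smooth, rapidly decaying $\omega_k$ so that the integrations by parts produce no boundary terms; the general statement then follows by the standard density/a~priori argument.

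For \eqref{eq 2.5}: $\tfrac{\dd}{\dd t}\|\omega_k\|_2^2=2\re\langle\partial_t\omega_k,\omega_k\rangle$; the transport term gives $\re(-ik\|y\omega_k\|_2^2)=0$, the term $2ik\psi_k$ gives $2\re(ik\langle\psi_k,\omega_k\rangle)=2\re(-ik\|\nabla_k\psi_k\|_2^2)=0$, and only $2\nu\re\langle\Delta_k\omega_k,\omega_k\rangle=-2\nu\|\nabla_k\omega_k\|_2^2$ survives. For \eqref{eq 2.6}: writing $\|\nabla_k\omega_k\|_2^2=-\langle\Delta_k\omega_k,\omega_k\rangle$ gives $\tfrac{\dd}{\dd t}\|\nabla_k\omega_k\|_2^2=-2\re\langle\partial_t\omega_k,\Delta_k\omega_k\rangle$; the dissipative term yields $-2\nu\|\Delta_k\omega_k\|_2^2$, the term $2ik\psi_k$ (with $\Delta_k\omega_k=\Delta_k^2\psi_k$) yields a multiple of $\re\big(ik\,\|\Delta_k\psi_k\|_2^2\big)=0$, and one integration by parts extracts $-4\re\langle iky\omega_k,\partial_y\omega_k\rangle$ from the transport term (the remaining piece being a multiple of $\re(ik\int y^2|\partial_y\omega_k|^2\,\dd y)=0$). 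For \eqref{eq 2.8}: $\tfrac{\dd}{\dd t}\|y\omega_k\|_2^2=2\re\langle y^2\partial_t\omega_k,\omega_k\rangle$; the transport term is a multiple of $\re(ik\|y^2\omega_k\|_2^2)=0$, two integrations by parts on the dissipative term give $2\nu\|\omega_k\|_2^2-2\nu\|y\nabla_k\omega_k\|_2^2$, and the term $2ik\psi_k$ gives $4\re\langle iky^2\psi_k,\Delta_k\psi_k\rangle=-8\re\langle iky\psi_k,\partial_y\psi_k\rangle$ after one integration by parts. For \eqref{eq 2.9}: using $\|\nabla_k\psi_k\|_2^2=-\langle\omega_k,\psi_k\rangle$ and $\Delta_k\partial_t\psi_k=\partial_t\omega_k$ reduces it to $\tfrac{\dd}{\dd t}\|\nabla_k\psi_k\|_2^2=-2\re\langle\partial_t\omega_k,\psi_k\rangle$; the dissipative term gives $-2\nu\re\langle\omega_k,\Delta_k\psi_k\rangle=-2\nu\|\omega_k\|_2^2$, the term $2ik\psi_k$ gives a multiple of $\re(ik\|\psi_k\|_2^2)=0$, and the transport term gives $2\re\langle iky^2\omega_k,\psi_k\rangle=4\re\langle iky\psi_k,\partial_y\psi_k\rangle$.

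The identity \eqref{eq 2.7} is the one that requires real care, and it is where I expect the main (purely computational) obstacle to lie. Differentiating and moving the $\partial_y$ off the second slot by integration by parts gives the key reduction
\[
\tfrac{\dd}{\dd t}\re\langle iky\omega_k,\partial_y\omega_k\rangle
=2\re\langle iky\,\partial_t\omega_k,\partial_y\omega_k\rangle-\re\big(ik\langle\omega_k,\partial_t\omega_k\rangle\big).
\]
Substituting the three terms of $\partial_t\omega_k$: the transport term $-iky^2\omega_k$ produces $-3|k|^2\|y\omega_k\|_2^2$ from the first pairing and $+|k|^2\|y\omega_k\|_2^2$ from the second, for a net $-2|k|^2\|y\omega_k\|_2^2$; the stretching term $2ik\psi_k$, after replacing $\omega_k$ by $\Delta_k\psi_k$ and integrating by parts in $\langle y\psi_k,\Delta_k\partial_y\psi_k\rangle$ and combining with the $2|k|^2\|\nabla_k\psi_k\|_2^2$ coming from $-\re(ik\langle\omega_k,2ik\psi_k\rangle)$, produces $-4|k|^2\|\partial_y\psi_k\|_2^2$ (the various $\|\psi_k\|_2^2$ and $|k|^4\|\psi_k\|_2^2$ pieces cancelling); and the dissipative term, using $\langle ikyf,g\rangle=-\langle f,ikyg\rangle$, produces exactly $-2\nu\re\langle\Delta_k\omega_k,iky\partial_y\omega_k\rangle$. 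The only difficulty is bookkeeping — keeping track of which boundary terms vanish, which weighted cross-terms are imaginary, and how the several $\psi_k$-pairings recombine — and there is no analytic subtlety once the manipulations are carried out on Schwartz-class solutions.
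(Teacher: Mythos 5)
Your proposal is correct and follows essentially the same route as the paper: differentiate each quadratic quantity, substitute the linear equation \eqref{eq 2.2}, integrate by parts in \(y\), and discard the pairings whose real part vanishes; I checked in particular your reduction \(\tfrac{\dd}{\dd t}\re\langle iky\omega_k,\partial_y\omega_k\rangle = 2\re\langle iky\,\partial_t\omega_k,\partial_y\omega_k\rangle-\re(ik\langle\omega_k,\partial_t\omega_k\rangle)\) and the cancellation of the \(\|\psi_k\|_2^2\) and \(|k|^4\|\psi_k\|_2^2\) pieces leaving \(-4|k|^2\|\partial_y\psi_k\|_2^2\), and both are right. The only differences from the paper are cosmetic (testing against \(\Delta_k\omega_k\) for \eqref{eq 2.6} instead of splitting \(\|\nabla_k\omega_k\|_2^2=\|k\omega_k\|_2^2+\|\partial_y\omega_k\|_2^2\), and the integration-by-parts reduction for \eqref{eq 2.7} rather than a fully term-by-term expansion), so no substantive gap remains.
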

	
	\begin{remark}
		In particular, combining \eqref{eq 2.8} and \eqref{eq 2.9}, we obtain
		\[\frac{\dd}{\dd t} \| y \omega_k \|_2^2 + 2 \frac{\dd}{\dd t} \| \nabla_k \psi_k \|_2^2 = -2\nu \| \omega_k \|_2^2 - 2 \nu \| y \nabla_k \omega_k \|_2^2 \le 0.\]
		This motivates the structure of the \(\gamma\) term in our definition of \(E_k\).
	\end{remark}
	
	\begin{proof}
		For \eqref{eq 2.5}, using integration by parts, we obtain
		\[\frac{\dd}{\dd t} \| \omega_k \|_2^2 =  2 \re \int_{\mathbb{R}} (iky^2 \omega_k + 2ik \psi_k + \nu \Delta_k \omega_k) \overline{\omega_k} \dd y = -2\nu \| \nabla_k \omega_k \|_2^2.\]
		Noting that \(\| \nabla_k \omega_k \|_2^2 = \| k \omega_k \|_2^2 + \| \partial_y \omega_k \|_2^2\), together with \eqref{eq 2.5}, we have
		\begin{align*}
			\frac{\dd}{\dd t} \| \nabla_k \omega_k \|_2^2 &= \frac{\dd}{\dd t} \| k \omega_k \|_2^2 + \frac{\dd}{\dd t} \| \partial_y \omega_k \|_2^2\\
			&= |k|^2 (-2 \nu \| \nabla_k \omega_k \|_2^2) + 2 \re \int_{\mathbb{R}} \partial_y (iky^2 \omega_k + 2ik \psi_k + \nu \Delta_k \omega_k) \partial_y \overline{\omega_k} \dd y\\
			&= -2\nu \| k \nabla_k \omega_k \|_2^2 + 4 \re \langle iky \omega_k, \partial_y \omega_k \rangle - 2\nu \| \partial_y \nabla_k \omega_k \|_2^2.
		\end{align*}
		Using \(\| \Delta_k \omega_k \|_2^2 = \| \nabla_k^2 \omega_k \|_2^2 = \| k \nabla_k \omega_k \|_2^2 + \| \partial_y \nabla_k \omega_k \|_2^2\), and then \eqref{eq 2.6} follows. For \eqref{eq 2.7}, we directly compute by \eqref{eq 2.2},
		\begin{align*}
			\frac{\dd}{\dd t} \re \langle iky \omega_k, \partial_y \omega_k \rangle =& \re \langle iky (- iky^2 \omega_k + 2ik \psi_k + \nu \Delta_k \omega_k), \partial_y \omega_k \rangle\\
			&+ \re \langle iky \omega_k, \partial_y (- iky^2 \omega_k + 2ik \psi_k + \nu \Delta_k \omega_k) \rangle\\
			=& \nu \re \int_{\mathbb{R}} iky \Delta_k \omega_k \partial_y \overline{\omega_k} + iky \omega_k \partial_y \Delta_k \overline{\omega_k} \dd y\\
			&+ \re \int_{\mathbb{R}} k^2y^3 \omega_k \partial_y \overline{\omega_k} - k^2y \omega_k \partial_y(y^2 \overline{\omega_k}) \dd y\\
			&+ \re \int_{\mathbb{R}} -2k^2y \psi_k \partial_y \overline{\omega_k} + 2k^2y \omega_k \partial_y \overline{\psi_k} \dd y.
		\end{align*}
		We treat the first term by integration by parts as
		\[\nu \re \int_{\mathbb{R}} iky \Delta_k \omega_k \partial_y \overline{\omega_k} + iky \omega_k \partial_y \Delta_k \overline{\omega_k} \dd y = -2\nu \re \langle \Delta_k \omega_k, iky \partial_y \omega_k \rangle,\]
		while for the second term we compute
		\[\re \int_{\mathbb{R}} k^2y^3 \omega_k \partial_y \overline{\omega_k} - k^2y \omega_k \partial_y(y^2 \overline{\omega_k}) \dd y = - 2|k|^2 \| y \omega_k \|_2^2.\]
		Lastly, turning to the third term, we have
		\[\re \int_{\mathbb{R}} -2k^2y \psi_k \partial_y \overline{\omega_k} + 2k^2y \omega_k \partial_y \overline{\psi_k} \dd y =  -4|k|^2 \| \partial_y \psi_k \|_2^2,\]
		and \eqref{eq 2.7} follows. For \eqref{eq 2.8}, we have
		\begin{align*}
			\frac{\dd}{\dd t} \| y \omega_k \|_2^2 &= 2 \re \int_{\mathbb{R}} y(- iky^2 \omega_k + 2ik \psi_k + \nu \Delta_k \omega_k) \overline{y \omega_k} \dd y\\
			&= 4 \re \int_{\mathbb{R}} iky^2 \psi_k \Delta_k \overline{\psi_k} \dd y + 2\nu \re \int_{\mathbb{R}} y^2 \Delta_k \omega_k \overline{\omega_k} \dd y\\
			&= -8 \re \langle iky \psi_k, \partial_y \psi_k \rangle -4\nu \re \int_{\mathbb{R}} y \partial_y \omega_k  \overline{\omega_k} \dd y - 2\nu \| y \nabla_k \omega_k \|_2^2.
		\end{align*}
		Notice
		\[\int_{\mathbb{R}} y \partial_y \omega_k  \overline{\omega_k} \dd y = - \int_{\mathbb{R}} (\overline{\omega_k} + y \partial_y \overline{\omega_k}) \omega_k \dd y = - \| \omega_k \|_2^2 - \int_{\mathbb{R}} y \partial_y \overline{\omega_k}\omega_k \dd y,\]
		which implies that
		\[\re \int_{\mathbb{R}} y \partial_y \omega_k  \overline{\omega_k} \dd y = \frac{1}{2} \left( \int_{\mathbb{R}} y \partial_y \omega_k  \overline{\omega_k} \dd y  + \int_{\mathbb{R}} y \partial_y \overline{\omega_k}\omega_k \dd y \right) = -\frac{1}{2} \| \omega_k \|_2^2.\]
		Hence
		\[\frac{\dd}{\dd t} \| y \omega_k \|_2^2 = 2\nu \| \omega_k \|_2^2 -2\nu \| y \nabla_k \omega_k \|_2^2 - 8 \re \langle iky \psi_k, \partial_y \psi_k \rangle.\]
		Turning to \eqref{eq 2.9}, we use \(\Delta_k \psi_k = \omega_k\) to compute
		\[\frac{\dd}{\dd t} \| \nabla_k \psi_k \|_2^2 = - 2\re \int_{\mathbb{R}} \partial_t \Delta_k \psi_k \overline{\psi_k} \dd y = -2 \re \int_{\mathbb{R}} (- iky^2 \omega_k + 2ik \psi_k + \nu \Delta_k \omega_k) \overline{\psi_k} \dd y = -2 \nu \| \omega_k \|_2^2 + 2 \re \int_{\mathbb{R}} iky^2 \Delta_k \psi_k \overline{\psi_k} \dd y,\]
		so \eqref{eq 2.9} follows from
		\[\re \int_{\mathbb{R}} iky^2 \Delta_k \psi_k \overline{\psi_k} \dd y = \re \int_{\mathbb{R}} - \partial_y(iky^2 \overline{\psi_k}) \partial_y \psi_k - ik^3y^2 \psi_k \overline{\psi_k} \dd y = 2 \re \langle iky \psi_k, \partial_y \psi_k \rangle. \qedhere\]
	\end{proof}
	
	Now, we prove Theorem \ref{thm 2.1}.
	
	\begin{proof}
		By Cauchy-Schwarz inequality, we have
		\[|2 c_\beta \beta_k \re \langle iky \omega_k, \partial_y \omega_k \rangle| \le \frac{c_\alpha \alpha_k}{4} \| \nabla_k \omega_k \|_2^2 + \frac{4 c_\beta^2 \beta_k^2}{c_\alpha \alpha_k} |k|^2 \| y \omega_k \|_2^2.\]
		In order to prove \eqref{eq 2.3}, we observe that the missing term \(\|k \psi_k \|_2\) may be bounded from as follows. Since
		\[\re \langle \partial_y \psi_k, y \omega_k \rangle = \re \langle \partial_y \psi_k, y \partial_y^2 \psi_k \rangle - \re \langle \partial_y \psi_k, y k^2 \psi_k \rangle = \frac{1}{2} \| k \psi_k \|_2^2 - \frac{1}{2} \| \partial_y \psi_k \|_2^2,\]
		it follows that
		\[\| k \psi_k \|_2^2 - \| \partial_y \psi_k \|_2^2 = 2 \re \langle \partial_y \psi_k, y \omega_k \rangle \le \| \partial_y \psi_k \|_2^2 + \| y \omega_k \|_2^2,\]
		which implies
		\[\| k \psi_k \|_2^2 \le 2 \| \partial_y \psi_k \|_2^2 + \| y \omega_k \|_2^2.\]
		Thus we obtain the upper and the lower bounds of \(E_k\), and then \eqref{eq 2.3} holds through the definitions of \(\alpha_k, \beta_k, \gamma_k\) and the choices of \(c_\alpha, c_\beta, c_\gamma\).
		
		By the equalities from Lemma \ref{lem 2.3}, \(E_k\) satisfies
		\begin{align*}
			\frac{\dd E_k}{\dd t} =& - \left( c_\gamma \gamma_k \nu \| \omega_k \|_2^2 + \nu \| \nabla_k \omega_k \|_2^2 + c_\alpha \alpha_k \nu \| \Delta_k \omega_k \|_2^2 + 4 c_\beta \beta_k |k|^2 \| y \omega_k \|_2^2 + c_\gamma \gamma_k \nu \| y \nabla_k \omega_k \|_2^2 + 8 c_\beta \beta_k |k|^2 \| \partial_y \psi_k \|_2^2 \right)\\
			&- 2 c_\alpha \alpha_k \re \langle iky \omega_k, \partial_y \omega_k \rangle - 4 c_\beta \beta_k \nu \re \langle \Delta_k \omega_k, iky \partial_y \omega_k \rangle.
		\end{align*}
		The first term is just \(-D_k\). To absorb the last two terms, we note that
		\[- 2 c_\alpha \alpha_k \re \langle iky \omega_k, \partial_y \omega_k \rangle \le \frac{\nu}{4} \| \nabla_k \omega_k \|_2^2 + \frac{4 c_\alpha^2 \alpha_k^2}{\nu} |k|^2 \| y \omega_k \|_2^2,\]
		and
		\[- 4 c_\beta \beta_k \nu \re \langle \Delta_k \omega_k, iky \partial_y \omega_k \rangle \le \frac{c_\alpha \alpha_k \nu}{2} \| \Delta_k \omega_k \|_2^2 + \frac{8 c_\beta^2 \beta_k^2 \nu}{c_\alpha \alpha_k} |k|^2 \| y \nabla_k \omega_k \|_2^2.\]
		Thus
		\begin{align*}
			0 \ge& \frac{\dd E_k}{\dd t} + c_\gamma \gamma_k \nu \| \omega_k \|_2^2 + \frac{3\nu}{4} \| \nabla_k \omega_k \|_2^2 + \frac{c_\alpha \alpha_k \nu}{2} \| \Delta_k \omega_k \|_2^2 + \left( 4 c_\beta \beta_k - \frac{4 c_\alpha^2 \alpha_k^2}{\nu} \right) |k|^2 \| y \omega_k \|_2^2\\
			&+ \left( c_\gamma \gamma_k \nu - \frac{8 c_\beta^2 \beta_k^2 \nu}{c_\alpha \alpha_k} |k|^2 \right) \| y \nabla_k \omega_k \|_2^2 + 8 c_\beta \beta_k |k|^2 \| \partial_y \psi_k \|_2^2\\
			\eqqcolon& \frac{\dd E_k}{\dd t} + I.
		\end{align*}
		It suffices to prove \(\lambda_k E_k \lesssim I\) since \(D_k \lesssim I\). We will show this in both the \(|k| \ge \nu^{-\frac{1}{3}}\) case and the \(|k| \le \nu^{-\frac{1}{3}}\) case.
		
		If \(|k| \ge \nu^{-\frac{1}{3}}\), we compute \(I\) and \(\lambda_k E_k\) explicitly and then we obtain \(\lambda_k E_k \lesssim I\). Now we consider the \(|k| \le \nu^{-\frac{1}{3}}\) case. Taking Fourier transform in \(y\), we can get
		\[\| k \nabla_k \omega_k \|_2^2 \lesssim \| \Delta_k \omega_k \|_2^2, \| k \partial_y \psi_k \|_2^2 \lesssim \| \Delta_k \psi_k \|_2^2 = \| \omega_k \|_2^2.\]
		Using them and \eqref{eq 2.3} to estimate \(\lambda_k E_k\),
		\begin{align*}
			\lambda_k E_k &\lesssim \nu \| k \omega_k \|_2^2 + \alpha_k \nu \| k \nabla_k \omega_k \|_2^2 + \gamma_k \nu |k|^2 \| y \omega_k \|_2^2 + \gamma_k \nu \| k \partial_y \psi_k \|_2^2\\
			&\lesssim \nu \| \nabla_k \omega_k \|_2^2 + \alpha_k \nu \| \Delta_k \omega_k \|_2^2 + \nu^\frac{1}{3} |k|^2 \| y \omega_k \|_2^2 + \gamma_k \nu \| \omega_k \|_2^2\\
			&\lesssim I. \qedhere
		\end{align*}
	\end{proof}

	\section{Non-linear Problem}
	\label{sec 3}
	
	For the non-linear problem, the non-linear term \(-(\nabla^\bot \psi \cdot \nabla \omega)_k\) combines multiple frequencies. Hence the energy \(E_k\) that depends on \(k\) is not sufficient. Thus we define the following energies and dissipations.
	
	\begin{definition}
		We define the energies
		\[\mathcal{E} = \mathcal{E}_1 + \mathcal{E}_2 = \int_{\mathbb{R}} \frac{\langle c \lambda_k t \rangle^{2J}}{M_k(t)} \langle k \rangle^{2m} E_k \dd k + \sup_{k \in \mathbb{R}} \left( \frac{1}{2} \| y \omega_k \|_2^2 + \| \nabla_k \psi_k \|_2^2 \right),\]
		and the dissipations
		\[\mathcal{D} = \mathcal{D}_1 + \mathcal{D}_2 = \int_{0}^{t} \widetilde{\mathcal{D}} \dd s + \sup_{k \in \mathbb{R}} \int_{0}^{t} \nu \| \omega_k \|_2^2 + \nu \| y \nabla_k \omega_k \|_2^2 \dd s,\]
		where
		\[\widetilde{\mathcal{D}} = \int_{\mathbb{R}} \frac{\langle c \lambda_k s \rangle^{2J}}{M_k(s)} \langle k \rangle^{2m} D_k \dd k,\]
		\(J \in [1, +\infty), m \in (\frac{3}{4}, 1)\), \(M_k(t)\) solves the ODE
		\[\left\{
		\begin{aligned}
			&M_k'(t) = cJ^2 \lambda_k \frac{(c \lambda_k t)^2}{\langle c \lambda_k t \rangle^4} M_k(t),\\
			&M_k(0) = 1.
		\end{aligned}
		\right.\]
	\end{definition}
	
	\begin{remark}
		The constant \(c\) in the definition is a sufficiently small positive constant determined by Theorem \ref{thm 2.1}. Importantly, the smallness condition on \(c\) is independent of \(\nu, J, m\).
	\end{remark}
	
	\begin{remark}
		The multiplier \(M_k(t)\) is included in \(\mathcal{E}_1\) to address terms in \(\frac{\dd \mathcal{E}_1}{\dd t}\) which arise from the time-derivative falling on \(\langle c \lambda_k t \rangle^{2J}\). Solving the ODE explicitly, we can get \(1 \le M_k(t) \le e^\frac{\pi J^2}{4}\) for all \(t \ge 0\) and all \(k \in \mathbb{R}\). Thus \(M_k(t)\) is uniformly bounded above and below in both \(k\) and \(t\).
	\end{remark}

	For \(\omega_k\) solving \eqref{eq 2.1}, we define
	\[L_k = - iky^2 \omega_k + 2ik \psi_k + \nu \Delta_k \omega_k,\]
	and
	\[NL_k = - (\nabla^\bot \psi \cdot \nabla \omega)_k = - \int_{\mathbb{R}} \nabla_{k-k'}^\bot \psi_{k-k'} \cdot \nabla_{k'} \omega_{k'} \dd k',\]
	then
	\[\partial_t \omega_k = L_k + NL_k.\]
	By Newton-Leibniz formula,
	\begin{align}
		\label{eq 3.1}
		\mathcal{E}(t) + \mathcal{D}_2(t) =& \mathcal{E}_1(0) + \int_{0}^{t} \frac{\dd \mathcal{E}_1}{\dd s} \dd s + \sup_{k \in \mathbb{R}} \left( \frac{1}{2} \| y \omega_k(0) \|_2^2 + \| \nabla_k \psi_k(0) \|_2^2 + \int_{0}^{t} \frac{\dd}{\dd s} \left( \frac{1}{2} \| y \omega_k \|_2^2 + \| \nabla_k \psi_k \|_2^2 \right) \dd s \right) \nonumber\\
		&+ \sup_{k \in \mathbb{R}} \int_{0}^{t} \nu \| \omega_k \|_2^2 + \nu \| y \nabla_k \omega_k \|_2^2 \dd s \nonumber\\
		\le& \mathcal{E}_1(0) + \int_{0}^{t} \frac{\dd \mathcal{E}_1}{\dd s} \dd s + 2 \sup_{k \in \mathbb{R}} \bigg( \frac{1}{2} \| y \omega_k(0) \|_2^2 + \| \nabla_k \psi_k(0) \|_2^2 + \int_{0}^{t} \frac{\dd}{\dd s} \left( \frac{1}{2} \| y \omega_k \|_2^2 + \| \nabla_k \psi_k \|_2^2 \right) \dd s \nonumber\\
		&+ \int_{0}^{t} \nu \| \omega_k \|_2^2 + \nu \| y \nabla_k \omega_k \|_2^2 \dd s \bigg) \nonumber\\
		\le& 2\mathcal{E}(0) + \int_{0}^{t} \frac{\dd \mathcal{E}_1}{\dd s} \dd s + 2 \sup_{k \in \mathbb{R}} \int_{0}^{t} \frac{\dd}{\dd s} \left( \frac{1}{2} \| y \omega_k \|_2^2 + \| \nabla_k \psi_k \|_2^2 \right) + \nu \| \omega_k \|_2^2 + \nu \| y \nabla_k \omega_k \|_2^2 \dd s.
	\end{align}
	We divide \(\frac{\dd \mathcal{E}_1}{\dd s}\) into the linear part, the nonlinear part, and the terms where the time derivative acts on the multipliers.
	\[\frac{\dd \mathcal{E}_1}{\dd s} = \mathcal{L}_1 + \mathcal{NL}_1 + \int_{\mathbb{R}} Jc \lambda_k \frac{2c \lambda_k s}{\langle c \lambda_k s \rangle^2} \frac{\langle c \lambda_k s \rangle^{2J}}{M_k(s)} \langle k \rangle^{2m} E_k \dd k - \int_{\mathbb{R}} \frac{M_k'(s)}{M_k(s)} \frac{\langle c \lambda_k s \rangle^{2J}}{M_k(s)} \langle k \rangle^{2m} E_k \dd k,\]
	where
	\begin{align*}
		\mathcal{L}_1 =& \int_{\mathbb{R}} \frac{\langle c \lambda_k s \rangle^{2J}}{M_k(s)} \langle k \rangle^{2m} \bigg( \re \langle \omega_k ,L_k \rangle + c_\alpha \alpha_k \re \langle \nabla_k \omega_k, \nabla_k L_k \rangle + 2 c_\beta \beta_k (\re \langle iky L_k, \partial_y \omega_k \rangle + \re \langle iky \omega_k, \partial_y L_k \rangle)\\
		&+ c_\gamma \gamma_k (\re \langle y \omega_k, y L_k \rangle + 2 \re \langle \nabla_k \psi_k , \nabla_k \Delta_k^{-1} L_k \rangle) \bigg) \dd k,
	\end{align*}
	and
	\begin{align*}
		\mathcal{NL}_1 =& \int_{\mathbb{R}} \frac{\langle c \lambda_k s \rangle^{2J}}{M_k(s)} \langle k \rangle^{2m} \bigg( \re \langle \omega_k ,NL_k \rangle + c_\alpha \alpha_k \re \langle \nabla_k \omega_k, \nabla_k NL_k \rangle + 2 c_\beta \beta_k (\re \langle iky NL_k, \partial_y \omega_k \rangle + \re \langle iky \omega_k, \partial_y NL_k \rangle)\\
		&+ c_\gamma \gamma_k ( \re \langle y \omega_k, y NL_k \rangle + 2 \re \langle \nabla_k \psi_k , \nabla_k \Delta_k^{-1} NL_k \rangle) \bigg) \dd k.
	\end{align*}
	By and Theorem \ref{thm 2.1}, we have
	\[\mathcal{L}_1 \le \int_{\mathbb{R}} \frac{\langle c \lambda_k s \rangle^{2J}}{M_k(s)} \langle k \rangle^{2m} (-4cD_k -4c \lambda_k E_k) \dd k = -4c \widetilde{\mathcal{D}} -4 \int_{\mathbb{R}} c \lambda_k \frac{\langle c \lambda_k s \rangle^{2J}}{M_k(s)} \langle k \rangle^{2m} E_k \dd k.\]
	Using Young's inequality and recalling the definition of \(M_k(t)\), we have
	\begin{align*}
		\int_{\mathbb{R}} Jc \lambda_k \frac{2c \lambda_k s}{\langle c \lambda_k s \rangle^2} \frac{\langle c \lambda_k s \rangle^{2J}}{M_k(s)} \langle k \rangle^{2m} E_k \dd k &\le \int_{\mathbb{R}} c \lambda_k \frac{\langle c \lambda_k s \rangle^{2J}}{M_k(s)} \langle k \rangle^{2m} E_k \dd k + \int_{\mathbb{R}} cJ^2 \lambda_k \frac{(c \lambda_k s)^2}{\langle c \lambda_k s \rangle^4} \frac{\langle c \lambda_k s \rangle^{2J}}{M_k(s)} \langle k \rangle^{2m} E_k \dd k\\
		&= \int_{\mathbb{R}} c \lambda_k \frac{\langle c \lambda_k s \rangle^{2J}}{M_k(s)} \langle k \rangle^{2m} E_k \dd k + \int_{\mathbb{R}} \frac{M_k'(s)}{M_k(s)} \frac{\langle c \lambda_k s \rangle^{2J}}{M_k(s)} \langle k \rangle^{2m} E_k \dd k.
	\end{align*}
	Thus we observe that
	\[\frac{\dd \mathcal{E}_1}{\dd t} \le -4c \widetilde{\mathcal{D}} + \mathcal{NL}_1.\]
	Inserting the above equation and \eqref{eq 2.8}, \eqref{eq 2.9} from Lemma \ref{lem 2.3} into \eqref{eq 3.1}, we get
	\[\mathcal{E}(t) + \mathcal{D}_2(t) \le 2\mathcal{E}(0) - 4c \mathcal{D}_1(t) + \int_{0}^{t} \mathcal{NL}_1 \dd s + 2 \sup_{k \in \mathbb{R}} \left( \int_{0}^{t} \re \langle y \omega_k, y NL_k \rangle + 2 \re \langle \nabla_k \psi_k , \nabla_k \Delta_k^{-1} NL_k \rangle \dd s \right),\]
	which implies
	\begin{equation}
		\label{eq 3.2}
		\mathcal{E}(t) \le 2\mathcal{E}(0) - 4c \mathcal{D}(t) + \mathbb{NL}(t),
	\end{equation}
	since \(c\) is sufficiently small, where
	\[\mathbb{NL}(t) = \int_{0}^{t} \mathcal{NL}_1 \dd s + 2 \sup_{k \in \mathbb{R}} \int_{0}^{t} \re \langle y \omega_k, y NL_k \rangle \dd s + 4 \sup_{k \in \mathbb{R}} \int_{0}^{t} \re \langle \nabla_k \psi_k , \nabla_k \Delta_k^{-1} NL_k \rangle \dd s.\]
	Thus it suffices to give the bound of \(\mathbb{NL}\). In order to estimate \(\mathbb{NL}\), we separate it into the following terms
	\[T_1 = \int_{0}^{t} \int_{\mathbb{R}} \frac{\langle c \lambda_k s \rangle^{2J}}{M_k(s)} \langle k \rangle^{2m} \re \langle \omega_k, NL_k \rangle \dd k \dd s,\]
	\[T_2 = c_\alpha \int_{0}^{t} \int_{\mathbb{R}} \frac{\langle c \lambda_k s \rangle^{2J}}{M_k(s)} \langle k \rangle^{2m} \alpha_k \re \langle \nabla_k \omega_k, \nabla_k NL_k \rangle \dd k \dd s,\]
	\[T_3 = 2 c_\beta \int_{0}^{t} \int_{\mathbb{R}} \frac{\langle c \lambda_k s \rangle^{2J}}{M_k(s)} \langle k \rangle^{2m} \beta_k \re \langle iky NL_k, \partial_y \omega_k \rangle \dd k \dd s,\]
	\[T_4 = 2 c_\beta \int_{0}^{t} \int_{\mathbb{R}} \frac{\langle c \lambda_k s \rangle^{2J}}{M_k(s)} \langle k \rangle^{2m} \beta_k \re \langle iky \omega_k, \partial_y NL_k \rangle \dd k \dd s,\]
	\[T_5 = c_\gamma \int_{0}^{t} \int_{\mathbb{R}} \frac{\langle c \lambda_k s \rangle^{2J}}{M_k(s)} \langle k \rangle^{2m} \gamma_k \re \langle y \omega_k, y NL_k \rangle \dd k \dd s,\]
	\[T_6 = 2 c_\gamma \int_{0}^{t} \int_{\mathbb{R}} \frac{\langle c \lambda_k s \rangle^{2J}}{M_k(s)} \langle k \rangle^{2m} \gamma_k \re \langle \nabla_k \psi_k , \nabla_k \Delta_k^{-1} NL_k \rangle \dd k \dd s,\]
	\[T_7 = 2 \sup_{k \in \mathbb{R}} \int_{0}^{t} \re \langle y \omega_k, y NL_k \rangle \dd s,\]
	\[T_8 = 4 \sup_{k \in \mathbb{R}} \int_{0}^{t} \re \langle \nabla_k \psi_k , \nabla_k \Delta_k^{-1} NL_k \rangle \dd s.\]
	
	\subsection{Technical Lemmas}
	
	Now, we give some minor technical lemmas will be used throughout the eatimates of \(\mathbb{NL}\). The proofs rely on basic inequalities of analysis, namely H\"{o}lder's inequality and Gagliardo-Nirenberg-Sobolev inequality.
	
	\begin{lemma}
		\label{lem 3.4}
		The following estimates hold.
		\begin{equation}
			\label{eq 3.3}
			\int_{\mathbb{R}} \| \nabla_k \omega_k \|_\infty \dd k \lesssim \nu^{-\frac{5}{6}} \widetilde{\mathcal{D}}^\frac{1}{2},
		\end{equation}
		\begin{equation}
			\label{eq 3.4}
			\int_{\mathbb{R}} \| \nabla_k \psi_k \|_\infty \dd k \lesssim \mathcal{E}^\frac{1}{2},
		\end{equation}
	\end{lemma}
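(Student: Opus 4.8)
The plan is to derive both bounds from the one–dimensional interpolation (Agmon) inequality \(\|f\|_\infty^2\lesssim\|f\|_2\|\partial_y f\|_2\) in the \(y\)–variable, applied to \(\nabla_k\omega_k\), respectively \(\nabla_k\psi_k\), trading the \(L^\infty_y\) norm for \(L^2_y\) norms of the field and of one extra \(y\)–derivative; then I control the extra derivative by \(\|\Delta_k\omega_k\|_2\) (resp. \(\|\omega_k\|_2\)) using the pointwise–in–\(k\) identity \(\|\Delta_k f\|_2^2=\|\nabla_k^2 f\|_2^2=\|k\nabla_k f\|_2^2+\|\partial_y\nabla_k f\|_2^2\) already recorded inside the proof of Lemma~\ref{lem 2.3}; and finally I close the \(k\)–integral with a Cauchy–Schwarz in \(k\), the surviving pure \(k\)–integral being finite and of the right size in \(\nu\) because \(m>\tfrac34\) and \(\alpha_k,\gamma_k\) are explicit power laws.

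For \eqref{eq 3.3}: applying Agmon componentwise together with \(\|\partial_y\nabla_k\omega_k\|_2\le\|\Delta_k\omega_k\|_2\), \(\||k|\partial_y\omega_k\|_2,\|\partial_y^2\omega_k\|_2\le\|\Delta_k\omega_k\|_2\), and \(\||k|\omega_k\|_2,\|\partial_y\omega_k\|_2\le\|\nabla_k\omega_k\|_2\), gives \(\|\nabla_k\omega_k\|_\infty\lesssim\|\nabla_k\omega_k\|_2^{1/2}\|\Delta_k\omega_k\|_2^{1/2}\). Since every term of \(D_k\) is nonnegative, \(D_k\ge\nu\|\nabla_k\omega_k\|_2^2\) and \(D_k\ge c_\alpha\alpha_k\nu\|\Delta_k\omega_k\|_2^2\), whence \(\|\nabla_k\omega_k\|_\infty\lesssim\nu^{-1/2}\alpha_k^{-1/4}D_k^{1/2}\). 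Writing \(D_k^{1/2}=\big(\tfrac{\langle c\lambda_k s\rangle^{2J}}{M_k(s)}\langle k\rangle^{2m}D_k\big)^{1/2}\big(\tfrac{M_k(s)}{\langle c\lambda_k s\rangle^{2J}\langle k\rangle^{2m}}\big)^{1/2}\), bounding the second factor by \(\lesssim\langle k\rangle^{-m}\) (using \(1\le M_k(s)\lesssim1\) and \(\langle c\lambda_k s\rangle\ge1\)), and applying Cauchy–Schwarz in \(k\),
\[
\int_{\mathbb{R}}\|\nabla_k\omega_k\|_\infty\,\dd k\lesssim\nu^{-1/2}\,\widetilde{\mathcal{D}}^{1/2}\Big(\int_{\mathbb{R}}\langle k\rangle^{-2m}\alpha_k^{-1/2}\,\dd k\Big)^{1/2}.
\]
The last integral is \(\lesssim\nu^{-2/3}\): on \(|k|<\nu^{-1/3}\) it equals \(\nu^{-1/3}\int_{|k|<\nu^{-1/3}}\langle k\rangle^{-2m}\,\dd k\lesssim\nu^{-1/3}\) since \(2m>1\); on \(|k|\ge\nu^{-1/3}\) it is \(\nu^{-1/4}\int_{|k|\ge\nu^{-1/3}}|k|^{1/4}\langle k\rangle^{-2m}\,\dd k\approx\nu^{-1/4}(\nu^{-1/3})^{5/4-2m}=\nu^{(2m-2)/3}\), the tail converging because \(1/4-2m<-1\) for \(m>\tfrac34\); and both \(\nu^{-1/3}\) and \(\nu^{(2m-2)/3}\) are \(\lesssim\nu^{-2/3}\) for \(\nu\in(0,1)\). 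Hence \(\int\|\nabla_k\omega_k\|_\infty\,\dd k\lesssim\nu^{-1/2}\nu^{-1/3}\widetilde{\mathcal{D}}^{1/2}=\nu^{-5/6}\widetilde{\mathcal{D}}^{1/2}\).

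For \eqref{eq 3.4}: the same application of Agmon, now with \(\|\partial_y\nabla_k\psi_k\|_2\le\|\nabla_k^2\psi_k\|_2=\|\Delta_k\psi_k\|_2=\|\omega_k\|_2\), gives \(\|\nabla_k\psi_k\|_\infty^2\lesssim\|\nabla_k\psi_k\|_2\|\omega_k\|_2\). By the elliptic bound \(\|k\psi_k\|_2^2\le2\|\partial_y\psi_k\|_2^2+\|y\omega_k\|_2^2\) established inside the proof of Theorem~\ref{thm 2.1} we get \(\|\nabla_k\psi_k\|_2^2\lesssim\|\partial_y\psi_k\|_2^2+\|y\omega_k\|_2^2\), and by \eqref{eq 2.3} both \(\|\partial_y\psi_k\|_2^2\) and \(\|y\omega_k\|_2^2\) are \(\lesssim\gamma_k^{-1}E_k\) while \(\|\omega_k\|_2^2\lesssim E_k\); so \(\|\nabla_k\psi_k\|_\infty\lesssim\gamma_k^{-1/4}E_k^{1/2}\). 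Splitting \(E_k^{1/2}\) against the weight \(\tfrac{\langle c\lambda_k s\rangle^{2J}}{M_k(s)}\langle k\rangle^{2m}\) exactly as above and using Cauchy–Schwarz in \(k\),
\[
\int_{\mathbb{R}}\|\nabla_k\psi_k\|_\infty\,\dd k\lesssim\mathcal{E}_1^{1/2}\Big(\int_{\mathbb{R}}\langle k\rangle^{-2m}\gamma_k^{-1/2}\,\dd k\Big)^{1/2}\lesssim\mathcal{E}^{1/2},
\]
since \(\gamma_k\ge\nu^{-1/3}\ge1\) for all \(k\) (so \(\gamma_k^{-1/2}\le1\)) and \(\int_{\mathbb{R}}\langle k\rangle^{-2m}\,\dd k<\infty\).

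I do not anticipate a genuine obstacle: the substance is the one–dimensional interpolation inequality together with the elliptic/commutator identities already available in the paper. The only point that needs care is bookkeeping the powers of \(\nu\) in the argument for \eqref{eq 3.3} and checking that \(\int_{\mathbb{R}}\langle k\rangle^{-2m}\alpha_k^{-1/2}\,\dd k\) is no worse than \(\nu^{-2/3}\); this is exactly what pins the exponent at \(\nu^{-5/6}\) rather than something larger, and it is where the restriction \(m>\tfrac34\) — which forces the high-frequency tail \(\int_{|k|\ge\nu^{-1/3}}|k|^{1/4-2m}\,\dd k\) to converge with the correct power of \(\nu\) — is used.
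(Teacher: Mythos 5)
Your proof is correct and follows essentially the same route as the paper: the one-dimensional Gagliardo--Nirenberg (Agmon) inequality in \(y\), the bounds \(\|\partial_y\nabla_k\omega_k\|_2\le\|\Delta_k\omega_k\|_2\) and \(\|\partial_y\nabla_k\psi_k\|_2\le\|\omega_k\|_2\), and a weighted Cauchy--Schwarz/H\"older in \(k\) with the integral split at \(|k|=\nu^{-\frac13}\) using \(m>\frac34\). The only (cosmetic) difference is that you absorb the two \(L^2\) factors into \(D_k\) (resp.\ \(E_k\), via \eqref{eq 2.3} and the elliptic bound on \(\|k\psi_k\|_2\)) pointwise in \(k\) before a two-factor Cauchy--Schwarz, whereas the paper keeps them separate in a three-factor H\"older; the resulting \(k\)-integrals and \(\nu\)-powers are identical.
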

	
	\begin{proof}
		To estimate \(\| \cdot \|_\infty\), it is natural to use Gagliardo-Nirenberg-Sobolev inequality,
		\[\int_{\mathbb{R}} \| \nabla_k \omega_k \|_\infty \dd k \lesssim \int_{\mathbb{R}} \| \nabla_k \omega_k \|_2^\frac{1}{2} \| \partial_y \nabla_k \omega_k \|_2^\frac{1}{2} \dd k\\
		\lesssim \int_{\mathbb{R}} \| \nabla_k \omega_k \|_2^\frac{1}{2} \| \Delta_k \omega_k \|_2^\frac{1}{2} \dd k.\]
		By H\"{o}lder's inequality, we obtain
		\[\int_{\mathbb{R}} \| \nabla_k \omega_k \|_\infty \dd k \lesssim \left( \int_{\mathbb{R}} \langle k \rangle^{-2m} \alpha_k^{-\frac{1}{2}} \nu^{-1} \dd k \right)^\frac{1}{2} \left( \int_{\mathbb{R}} \langle k \rangle^{2m} \nu \| \nabla_k \omega_k \|_2^2 \dd k \right)^\frac{1}{4} \left( \int_{\mathbb{R}} \langle k \rangle^{2m} \alpha_k \nu \| \Delta_k \omega_k \|_2^2 \dd k \right)^\frac{1}{4}.\]
		For the first term, we split the integral domain into \(|k| \ge \nu^{-\frac{1}{3}}\) and \(|k| < \nu^{-\frac{1}{3}}\), then the \(|k| \ge \nu^{-\frac{1}{3}}\) term obeys
		\[\int_{|k| \ge \nu^{-\frac{1}{3}}} \langle k \rangle^{-2m} \alpha_k^{-\frac{1}{2}} \nu^{-1} \dd k \approx \int_{|k| \ge \nu^{-\frac{1}{3}}} |k|^{-2m} \left( \nu^\frac{1}{2} |k|^{-\frac{1}{2}} \right)^{-\frac{1}{2}} \nu^{-1} \dd k \approx \nu^{\frac{2}{3}m-\frac{5}{3}} \le \nu^{-\frac{5}{3}},\]
		where we have used \(m > \frac{3}{4}\) and \(\nu < 1\). Then we integrate over \(|k| < \nu^{-\frac{1}{3}}\),
		\[\int_{|k| < \nu^{-\frac{1}{3}}} \langle k \rangle^{-2m} \alpha_k^{-\frac{1}{2}} \nu^{-1} \dd k \lesssim \int_{|k| < \nu^{-\frac{1}{3}}} \left( \nu^\frac{2}{3} \right)^{-\frac{1}{2}} \nu^{-1} \dd k \approx \nu^{-\frac{5}{3}},\]
		Combining the two part and recalling the definition of \(\widetilde{\mathcal{D}}\), then \eqref{eq 3.3} follows. The proof of \eqref{eq 3.4} is similar to \eqref{eq 3.3}. Indeed
		\begin{align*}
			\int_{\mathbb{R}} \| \nabla_k \psi_k \|_\infty \dd k &\lesssim \int_{\mathbb{R}} \| \nabla_k \psi_k \|_2^\frac{1}{2} \| \partial_y \nabla_k \psi_k \|_2^\frac{1}{2} \dd k\\
			&\lesssim \int_{\mathbb{R}} \| \nabla_k \psi_k \|_2^\frac{1}{2} \| \omega_k \|_2^\frac{1}{2} \dd k\\
			&\lesssim \left( \int_{\mathbb{R}} \langle k \rangle^{-2m} \gamma_k^{-\frac{1}{2}} \dd k \right)^\frac{1}{2} \left( \int_{\mathbb{R}} \langle k \rangle^{2m} \gamma_k \| \nabla_k \psi_k \|_2^2 \dd k \right)^\frac{1}{4} \left( \int_{\mathbb{R}} \langle k \rangle^{2m} \| \omega_k \|_2^2 \dd k \right)^\frac{1}{4}.
		\end{align*}
		We still split the integral domain into \(|k| \ge \nu^{-\frac{1}{3}}\) and \(|k| < \nu^{-\frac{1}{3}}\), and recall \(m > \frac{3}{4}\) and \(\nu < 1\), then we have
		\[\int_{|k| \ge \nu^{-\frac{1}{3}}} \langle k \rangle^{-2m} \gamma_k^{-\frac{1}{2}} \dd k \approx \int_{|k| \ge \nu^{-\frac{1}{3}}} |k|^{-2m} \left( \nu^{-\frac{1}{2}} |k|^\frac{1}{2} \right)^{-\frac{1}{2}} \dd k \approx \nu^{\frac{2}{3}m} \le 1,\]
		and
		\[\int_{|k| < \nu^{-\frac{1}{3}}} \langle k \rangle^{-2m} \gamma_k^{-\frac{1}{2}} \dd k \lesssim \int_{|k| < \nu^{-\frac{1}{3}}} \left( \nu^{-\frac{2}{3}} \right)^{-\frac{1}{2}} \dd k \approx 1.\]
		Hence \eqref{eq 3.4} follows.
	\end{proof}
	
	\begin{lemma}
		\label{lem 3.5}
		The following estimate holds.
		\[\left( \int_{0}^{t} \left( \int_{\mathbb{R}} \| \nabla_k \psi_k \|_\infty \dd k \right)^2 \dd s \right)^\frac{1}{2} \lesssim \int_{\mathbb{R}} \left( \int_{0}^{t} \| \nabla_k \psi_k \|_\infty^2 \dd s \right)^\frac{1}{2} \dd k \lesssim \nu^{-\frac{2}{3}} \mathcal{D}^\frac{1}{2}.\]
	\end{lemma}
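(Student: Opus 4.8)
The left-hand inequality is simply Minkowski's integral inequality: with $f_k(s)\coloneqq\|\nabla_k\psi_k(s)\|_\infty\ge 0$, one has $\big\|\int_{\mathbb{R}}f_k\,\dd k\big\|_{L^2_s(0,t)}\le\int_{\mathbb{R}}\|f_k\|_{L^2_s(0,t)}\,\dd k$, and the right-hand side is exactly the middle quantity in the statement. So all the content is in the second inequality. The plan is to bound $\|\nabla_k\psi_k\|_\infty$ pointwise in $(s,k)$ by the one-dimensional Gagliardo--Nirenberg--Sobolev inequality, reduce everything to the scalar time-integral $\int_0^t\|\omega_k\|_2^2\,\dd s$, and then split the $\dd k$-integral at the threshold $|k|=\nu^{-\frac13}$, estimating that time-integral by $\mathcal{D}_2$ on $\{|k|<\nu^{-\frac13}\}$ and by $\mathcal{D}_1$ on $\{|k|\ge\nu^{-\frac13}\}$.

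Concretely: GNS applied to $\nabla_k\psi_k$ gives $\|\nabla_k\psi_k\|_\infty^2\lesssim\|\nabla_k\psi_k\|_2\,\|\partial_y\nabla_k\psi_k\|_2$, while Plancherel in $y$ together with $\Delta_k\psi_k=\omega_k$ gives $\|\partial_y\nabla_k\psi_k\|_2\le\|\omega_k\|_2$ and $\|\nabla_k\psi_k\|_2\le|k|^{-1}\|\omega_k\|_2$; hence $\|\nabla_k\psi_k\|_\infty^2\lesssim|k|^{-1}\|\omega_k\|_2^2$, so $\big(\int_0^t\|\nabla_k\psi_k\|_\infty^2\,\dd s\big)^{1/2}\lesssim|k|^{-1/2}\big(\int_0^t\|\omega_k\|_2^2\,\dd s\big)^{1/2}$. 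On $\{|k|<\nu^{-\frac13}\}$ I would use the uniform-in-$k$ bound $\int_0^t\|\omega_k\|_2^2\,\dd s\le\nu^{-1}\mathcal{D}_2\le\nu^{-1}\mathcal{D}$ built into the definition of $\mathcal{D}_2$; since $\int_{|k|<\nu^{-1/3}}|k|^{-1/2}\,\dd k\approx\nu^{-\frac16}$, this region contributes $\lesssim\nu^{-\frac16}\nu^{-\frac12}\mathcal{D}^{1/2}=\nu^{-\frac23}\mathcal{D}^{1/2}$. On $\{|k|\ge\nu^{-\frac13}\}$, using $D_k\gtrsim c_\gamma\gamma_k\nu\|\omega_k\|_2^2$, the uniform lower bound $\tfrac{\langle c\lambda_k s\rangle^{2J}}{M_k(s)}\gtrsim 1$, and $\int_{\mathbb{R}}d_k\,\dd k=\mathcal{D}_1\le\mathcal{D}$ (Fubini) where $d_k\coloneqq\int_0^t\tfrac{\langle c\lambda_k s\rangle^{2J}}{M_k(s)}\langle k\rangle^{2m}D_k\,\dd s$, I get $\int_0^t\|\omega_k\|_2^2\,\dd s\lesssim(\gamma_k\nu)^{-1}\langle k\rangle^{-2m}d_k=\nu^{-\frac12}|k|^{-\frac12}\langle k\rangle^{-2m}d_k$ on this range; thus $\big(\int_0^t\|\nabla_k\psi_k\|_\infty^2\,\dd s\big)^{1/2}\lesssim\nu^{-\frac14}|k|^{-\frac34}\langle k\rangle^{-m}d_k^{1/2}$, and Cauchy--Schwarz in $k$ bounds this region by $\lesssim\nu^{-\frac14}\big(\int_{|k|\ge\nu^{-1/3}}|k|^{-\frac32}\langle k\rangle^{-2m}\,\dd k\big)^{1/2}\mathcal{D}^{1/2}\lesssim\nu^{\frac m3-\frac16}\mathcal{D}^{1/2}\le\mathcal{D}^{1/2}\le\nu^{-\frac23}\mathcal{D}^{1/2}$, using $\frac m3>\frac16$ (since $m>\frac34$) and $\nu<1$. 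Summing the two regions gives the claim.

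I do not expect a genuine obstacle here, but the point that makes the argument work is the choice of cutoff together with the choice of which dissipation to use where: no single bound for $\int_0^t\|\omega_k\|_2^2\,\dd s$ is good on all frequencies. The $D_k$-based estimate carries the weight $(\gamma_k\nu)^{-1}$, which decays like $|k|^{-1/2}$ at high frequency (harmless) but equals $\nu^{-\frac13}$ near $k=0$ and, combined with the $|k|^{-1}$ gained in passing from $\psi_k$ to $\omega_k$, produces a non-integrable singularity at the origin; conversely the uniform bound $\int_0^t\|\omega_k\|_2^2\le\nu^{-1}\mathcal{D}_2$ has no decay in $k$ and, after the $|k|^{-1/2}$ weight, fails to be integrable at infinity. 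Splitting at $|k|=\nu^{-\frac13}$ and feeding $\mathcal{D}_2$ to the low frequencies and $\mathcal{D}_1$ to the high ones removes both difficulties, and bookkeeping of the $\nu$-powers then shows that the exponent $\nu^{-\frac23}$ is dictated entirely by the low-frequency piece.
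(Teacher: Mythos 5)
Your proof is correct and follows essentially the same route as the paper: Gagliardo--Nirenberg--Sobolev plus $\Delta_k\psi_k=\omega_k$ to get $\|\nabla_k\psi_k\|_\infty\lesssim|k|^{-\frac12}\|\omega_k\|_2$, Minkowski's integral inequality, and a split at $|k|=\nu^{-\frac13}$ with the high frequencies absorbed into $\mathcal{D}_1$ via $D_k\gtrsim\gamma_k\nu\|\omega_k\|_2^2$ and H\"older/Cauchy--Schwarz in $k$, and the low frequencies into $\mathcal{D}_2$. Your high-frequency bookkeeping even yields the slightly sharper bound $\nu^{\frac m3-\frac16}\mathcal{D}^{\frac12}$, but as in the paper the low-frequency piece dictates the final exponent $\nu^{-\frac23}$.
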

	
	\begin{proof}
		Using Gagliardo-Nirenberg-Sobolev inequality to estimate \(\| \nabla_k \psi_k \|_\infty\),
		\[\| \nabla_k \psi_k \|_\infty \lesssim |k|^{-\frac{1}{2}} |k|^\frac{1}{2} \| \nabla_k \psi_k \|_2^\frac{1}{2} \| \partial_y \nabla_k \psi_k \|_2^\frac{1}{2} \lesssim |k|^{-\frac{1}{2}} \| \Delta_k \psi_k \|_2 = |k|^{-\frac{1}{2}} \| \omega_k \|_2.\]
		Then applying Minkowski’s inequality for integrals, we have
		\[\left( \int_{0}^{t} \left( \int_{\mathbb{R}} \| \nabla_k \psi_k \|_\infty \dd k \right)^2 \dd s \right)^\frac{1}{2} \lesssim \int_{\mathbb{R}} \left( \int_{0}^{t} \| \nabla_k \psi_k \|_\infty^2 \dd s \right)^\frac{1}{2} \dd k \lesssim \int_{\mathbb{R}} \left( \int_{0}^{t} |k|^{-1} \| \omega_k \|_2^2 \dd s \right)^\frac{1}{2} \dd k.\]
		We split the integral domain into \(|k| \ge \nu^{-\frac{1}{3}}\) and \(|k| < \nu^{-\frac{1}{3}}\), then the \(|k| \ge \nu^{-\frac{1}{3}}\) term obeys
		\begin{align*}
			\int_{|k| \ge \nu^{-\frac{1}{3}}} \left( \int_{0}^{t} |k|^{-1} \| \omega_k \|_2^2 \dd s \right)^\frac{1}{2} \dd k &= \int_{|k| \ge \nu^{-\frac{1}{3}}} \langle k \rangle^{-m} (\gamma_k \nu)^{-\frac{1}{2}} |k|^{-\frac{1}{2}} \left( \int_{0}^{t} \langle k \rangle^{2m} \gamma_k \nu  \| \omega_k \|_2^2 \dd s \right)^\frac{1}{2} \dd k\\
			&\lesssim \left( \int_{|k| \ge \nu^{-\frac{1}{3}}} \langle k \rangle ^{-2m} (\gamma_k \nu)^{-1} |k|^{-1} \dd k \right)^\frac{1}{2} \left( \int_{\mathbb{R}} \int_{0}^{t} \langle k \rangle^{2m} D_k \dd s \dd k \right)^\frac{1}{2}\\
			&\lesssim \nu^{-\frac{1}{6}} \mathcal{D}^\frac{1}{2},
		\end{align*}
		where we have used H\"{o}lder's inequality. Then we integrate over \(|k| < \nu^{-\frac{1}{3}}\).
		\[\int_{|k| < \nu^{-\frac{1}{3}}} \left( \int_{0}^{t} \left( |k|^{-\frac{1}{2}} \| \omega_k \|_2 \right)^2 \dd s \right)^\frac{1}{2} \dd k \lesssim \int_{|k| < \nu^{-\frac{1}{3}}} \nu^{-\frac{1}{2}} |k|^{-\frac{1}{2}} \dd k \cdot \sup_{k \in \mathbb{R}} \left( \int_{0}^{t} \nu \| \omega_k \|_2^2 \dd s \right)^\frac{1}{2} \lesssim \nu^{-\frac{2}{3}} \mathcal{D}^\frac{1}{2}.\]
		Combine the two parts and then the lemma follows.
	\end{proof}
	
	\begin{lemma}
		\label{lem 3.6}
		The following estimate holds.
		\[\int_{\mathbb{R}} \langle c \lambda_k s \rangle^{2J} \langle k \rangle^{2m} |k| \| \nabla_k \psi_k \|_\infty^2 \dd k \lesssim \nu^{-\frac{1}{3}} \widetilde{\mathcal{D}}.\]
	\end{lemma}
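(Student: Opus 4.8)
\emph{Proof proposal.} The plan is to reduce the weighted $L^\infty$ quantity to $\| \omega_k \|_2^2$ by a one-dimensional Gagliardo--Nirenberg--Sobolev interpolation, exactly in the spirit of the proof of Lemma \ref{lem 3.5}, and then to absorb the result into the dissipation using a uniform lower bound for the weight $\gamma_k$.

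First I would bound $\| \nabla_k \psi_k \|_\infty$ pointwise in $k$. Applying the one-dimensional inequality $\| g \|_{L^\infty_y} \lesssim \| g \|_{L^2_y}^{1/2} \| \partial_y g \|_{L^2_y}^{1/2}$ to the components of $\nabla_k \psi_k$, together with the elementary Fourier-side bounds $\| \partial_y \nabla_k \psi_k \|_2 \le \| \Delta_k \psi_k \|_2$ and $|k| \| \nabla_k \psi_k \|_2 \le \| \Delta_k \psi_k \|_2$ (both follow from $\| \Delta_k \psi_k \|_2^2 \ge |k|^2 \| \nabla_k \psi_k \|_2^2$ and $\| \partial_y \nabla_k \psi_k \|_2^2 \le \| \Delta_k \psi_k \|_2^2$), one gets
\[ \| \nabla_k \psi_k \|_\infty \lesssim |k|^{-\frac{1}{2}} \Big( |k|^{\frac{1}{2}} \| \nabla_k \psi_k \|_2^{\frac{1}{2}} \| \partial_y \nabla_k \psi_k \|_2^{\frac{1}{2}} \Big) \lesssim |k|^{-\frac{1}{2}} \| \Delta_k \psi_k \|_2 = |k|^{-\frac{1}{2}} \| \omega_k \|_2 , \]
so that $|k| \, \| \nabla_k \psi_k \|_\infty^2 \lesssim \| \omega_k \|_2^2$. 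Substituting this into the integrand cancels the surplus factor $|k|$ and leaves
\[ \int_{\mathbb{R}} \langle c \lambda_k s \rangle^{2J} \langle k \rangle^{2m} |k| \, \| \nabla_k \psi_k \|_\infty^2 \, \dd k \lesssim \int_{\mathbb{R}} \langle c \lambda_k s \rangle^{2J} \langle k \rangle^{2m} \| \omega_k \|_2^2 \, \dd k . \]

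Next I would observe that $\gamma_k \ge \nu^{-\frac{2}{3}}$ for every $k \in \mathbb{R}$: this is an equality when $|k| < \nu^{-\frac{1}{3}}$, while for $|k| \ge \nu^{-\frac{1}{3}}$ one has $\gamma_k = \nu^{-\frac{1}{2}} |k|^{\frac{1}{2}} \ge \nu^{-\frac{1}{2}} \nu^{-\frac{1}{6}} = \nu^{-\frac{2}{3}}$. Hence $\gamma_k \nu \ge \nu^{\frac{1}{3}}$, i.e. $\| \omega_k \|_2^2 \le \nu^{-\frac{1}{3}} \gamma_k \nu \| \omega_k \|_2^2$ pointwise in $k$. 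Since $D_k$ contains the term $c_\gamma \gamma_k \nu \| \omega_k \|_2^2$, this gives $\| \omega_k \|_2^2 \lesssim \nu^{-\frac{1}{3}} D_k$. Plugging this bound into the previous display and using $M_k(s) \approx 1$ uniformly in $k$ and $s$, the right-hand side is $\lesssim \nu^{-\frac{1}{3}} \int_{\mathbb{R}} \langle c \lambda_k s \rangle^{2J} \langle k \rangle^{2m} D_k \, \dd k \approx \nu^{-\frac{1}{3}} \widetilde{\mathcal{D}}$, which is the claim.

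I do not expect a genuine obstacle here; the only points requiring care are choosing the exponents in the interpolation so that the factor $|k|$ is consumed exactly, and verifying that $\gamma_k \nu$ is bounded below by $\nu^{1/3}$ uniformly in $k$. The latter is precisely where the definition of $\gamma_k$ and the threshold $|k| = \nu^{-1/3}$ enter, and it is the reason the loss in the estimate is exactly $\nu^{-1/3}$.
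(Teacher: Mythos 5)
Your proposal is correct and follows essentially the same route as the paper: the Gagliardo--Nirenberg--Sobolev interpolation $\| \nabla_k \psi_k \|_\infty^2 \lesssim \| \nabla_k \psi_k \|_2 \| \partial_y \nabla_k \psi_k \|_2$ combined with $|k|\| \nabla_k \psi_k \|_2, \| \partial_y \nabla_k \psi_k \|_2 \le \| \Delta_k \psi_k \|_2 = \| \omega_k \|_2$ to absorb the factor $|k|$, and then the uniform bound $\gamma_k \ge \nu^{-2/3}$ to compare with the term $c_\gamma \gamma_k \nu \| \omega_k \|_2^2$ in $D_k$. No gaps.
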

	
	\begin{proof}
		By Gagliardo-Nirenberg-Sobolev inequality, we obtain
		\[\int_{\mathbb{R}} \langle c \lambda_k s \rangle^{2J} \langle k \rangle^{2m} |k| \| \nabla_k \psi_k \|_\infty^2 \dd k \lesssim \int_{\mathbb{R}} \langle c \lambda_k s \rangle^{2J} \langle k \rangle^{2m} |k| \| \nabla_k \psi_k \|_2 \| \partial_y \nabla_k \psi_k \|_2 \dd k \lesssim \int_{\mathbb{R}} \langle c \lambda_k s \rangle^{2J} \langle k \rangle^{2m} \| \omega_k \|_2^2 \dd k.\]
		Notice \(\gamma_k \ge \nu^{-\frac{2}{3}}\), which implies the lemma.
	\end{proof}
	
	\subsection{Bound on \(T_1\)}
	
	We introduce the following frequency decomposition
	\[1 = 1_{|k-k'| \le \frac{|k|}{2}} + 1_{|k-k'| > \frac{|k|}{2}},\]
	and correspondingly \(T_1 \eqqcolon T_{1,LH} + T_{1,HL}\). If \(|k-k'| \le \frac{|k|}{2}\), then \(|k| \approx |k'|\). Using the fact and applying Cauchy-Schwarz inequality in \(y\),
	\[|T_{1,LH}| \lesssim \int_{0}^{t} \iint_{\mathbb{R}^2} \langle c \lambda_k s \rangle^J \langle k \rangle^m \| \omega_k \|_2 \langle c \lambda_{k'} s \rangle^J \langle k' \rangle^m \| \nabla_{k'} \omega_{k'} \|_2 \| \nabla_{k-k'}^\bot \psi_{k-k'} \|_\infty \dd k \dd k' \dd s,\]
	where we have used the property that \(\lambda_k\) is monotonically increasing with respect to \(|k|\). Then we use Young's convolution inequality to place the \(k\) and \(k'\) factors into \(L^2\) and the \(k-k'\) factor into \(L^1\),
	\begin{align*}
		|T_{1,LH}| &\lesssim \int_{0}^{t} \left( \int_{\mathbb{R}} \langle c \lambda_k s \rangle^{2J} \langle k \rangle^{2m} \| \omega_k \|_2^2 \dd k \right)^\frac{1}{2} \left( \int_{\mathbb{R}} \langle c \lambda_k s \rangle^{2J} \langle k \rangle^{2m} \| \nabla_k \omega_k \|_2^2 \dd k \right)^\frac{1}{2} \int_{\mathbb{R}} \| \nabla_k \psi_k \|_\infty \dd k \dd s\\
		&\lesssim \int_{0}^{t} \left( \int_\mathbb{R} \langle c \lambda_k s \rangle^{2J} \langle k \rangle^{2m} (\gamma_k \nu)^{-1} D_k \dd k \right)^\frac{1}{2} \left( \int_\mathbb{R} \langle c \lambda_k s \rangle^{2J} \langle k \rangle^{2m} \nu^{-1} D_k \dd k \right)^\frac{1}{2} \int_{\mathbb{R}} \| \nabla_k \psi_k \|_\infty \dd k \dd s.
	\end{align*} 
	According to Lemma \ref{lem 3.4} and recalling \(\gamma_k \ge \nu^{-\frac{2}{3}}\), we have
	\[|T_{1,LH}| \lesssim \int_{0}^{t} (\nu^{-\frac{1}{3}} \widetilde{\mathcal{D}})^\frac{1}{2} (\nu^{-1} \widetilde{\mathcal{D}})^\frac{1}{2} \mathcal{E}^\frac{1}{2} \dd s \lesssim \nu^{-\frac{2}{3}} \mathcal{D}(t) \sup_{s \in [0,t]}\mathcal{E}(s)^\frac{1}{2}.\]
	Turning to \(T_{1,HL}\), we similarly use Cauchy-Schwarz inequality in \(y\), Young's convolution inequality and Lemma \ref{lem 3.4},
	\begin{align*}
		|T_{1,HL}| &\lesssim \int_{0}^{t} \iint_{\mathbb{R}^2} \langle c \lambda_k s \rangle^J \langle k \rangle^m \| \omega_k \|_2 \langle c \lambda_{k-k'} s \rangle^J \langle k-k' \rangle^m \| \nabla_{k-k'}^\bot \psi_{k-k'} \|_2 \| \nabla_{k'} \omega_{k'} \|_\infty \dd k \dd k' \dd s\\
		&\lesssim \int_{0}^{t} \left( \int_{\mathbb{R}} \langle c \lambda_k s \rangle^{2J} \langle k \rangle^{2m} \| \omega_k \|_2^2 \dd k \right)^\frac{1}{2} \left( \int_{\mathbb{R}} \langle c \lambda_k s \rangle^{2J} \langle k \rangle^{2m} \| \nabla_k \psi_k \|_2^2 \dd k \right)^\frac{1}{2} \int_{\mathbb{R}} \| \nabla_k \omega_k \|_\infty \dd k \dd s\\
		&\lesssim \int_{0}^{t} \left( \int_{\mathbb{R}} \langle c \lambda_k s \rangle^{2J} \langle k \rangle^{2m} (\gamma_k \nu)^{-1} D_k \dd k \right)^\frac{1}{2} \left( \int_{\mathbb{R}} \langle c \lambda_k s \rangle^{2J} \langle k \rangle^{2m} \gamma_k^{-1} E_k \dd k \right)^\frac{1}{2} \nu^{-\frac{5}{6}} \widetilde{\mathcal{D}}^\frac{1}{2} \dd s\\
		&\lesssim \nu^{-\frac{2}{3}} \mathcal{D}(t) \sup_{s \in [0,t]} \mathcal{E}(s)^\frac{1}{2}.
	\end{align*}
	
	\subsection{Bound on \(T_2, T_3\) and \(T_4\)}
	
	To estimate \(T_2\), we apply integration by parts,
	\[T_2 = - c_\alpha \int_{0}^{t} \int_{\mathbb{R}} \frac{\langle c \lambda_k s \rangle^{2J}}{M_k(s)} \langle k \rangle^{2m} \alpha_k \re \langle \Delta_k \omega_k, NL_k \rangle \dd k \dd s.\]
	Using the same frequency decomposition as \(T_1\), we note that \(\alpha_k \le \nu^\frac{2}{3}\), and the estimates are similar to \(T_1\). The bound of \(T_2\) is 
	\[|T_2| \lesssim \nu^{-\frac{2}{3}} \mathcal{D}(t) \sup_{s \in [0,t]} \mathcal{E}(s)^\frac{1}{2}.\]
	
	For \(T_3\), we still use the frequency decomposition \(1 = 1_{|k-k'| \le \frac{|k|}{2}} + 1_{|k-k'| > \frac{|k|}{2}}\). Recalling the definition of \(\beta_k\), we obtain \(\beta_k|k| \le 1\), and then, similarly to \(T_1\), we have 
	\[|T_3| \lesssim \nu^{-\frac{2}{3}} \mathcal{D}(t) \sup_{s \in [0,t]} \mathcal{E}(s)^\frac{1}{2}.\]
	
	We turn our attention now to \(T_4\). We use integration by parts and separate \(T_4\) into two parts,
	\begin{align*}
		T_4 &= -2 c_\beta \int_{0}^{t}  \int_{\mathbb{R}} \frac{\langle c \lambda_k s \rangle^{2J}}{M_k(s)} \langle k \rangle^{2m} \beta_k \re \langle \partial_y (iky \omega_k), NL_k \rangle \dd k \dd s\\
		&= -2 c_\beta \int_{0}^{t} \int_{\mathbb{R}} \frac{\langle c \lambda_k s \rangle^{2J}}{M_k(s)} \langle k \rangle^{2m} \beta_k \re \langle iky \partial_y \omega_k, NL_k \rangle \dd k \dd s -2 c_\beta \int_{0}^{t} \int_{\mathbb{R}} \frac{\langle c \lambda_k s \rangle^{2J}}{M_k(s)} \langle k \rangle^{2m} \beta_k \re \langle ik \omega_k, NL_k \rangle \dd k \dd s\\
		&\eqqcolon T_4^1 + T_4^2.
	\end{align*}
	For \(T_4^1\), we note 
	\[|T_4^1| = |T_3| \lesssim \nu^{-\frac{2}{3}} \mathcal{D}(t) \sup_{s \in [0,t]} \mathcal{E}(s)^\frac{1}{2}.\]
	Since \(\beta_k |k| \le 1\), by the estimate of \(T_1\), we have \[|T_4^2| \lesssim \nu^{-\frac{2}{3}} \mathcal{D}(t) \sup_{s \in [0,t]} \mathcal{E}(s)^\frac{1}{2}.\]
	
	\subsection{Bound on \(T_5\)}
	
	\(T_5\) requires a more refined frequency decomposition. Namely, we use the following decomposition
	\[1 = 1_{|k-k'| \le \frac{|k|}{2}} 1_{|k| \ge \nu^{-\frac{1}{3}}} + 1_{|k-k'| \le \frac{|k|}{2}} 1_{|k| < \nu^{-\frac{1}{3}}} + 1_{|k-k'| > \frac{|k|}{2}} 1_{|k-k'| \ge \nu^{-\frac{1}{3}}} + 1_{|k-k'| > \frac{|k|}{2}} 1_{|k-k'| < \nu^{-\frac{1}{3}}},\]
	and correspondingly \(T_5 \eqqcolon T_{5,LH,H} + T_{5,LH,L} + T_{5,HL,H''} + T_{5,HL,L''}\). For \(T_{5,LH,H}\), we use \(|k| \approx |k'|\), Cauchy-Schwarz inequality in \(y\), Young's convolution inequality and Lemma \ref{lem 3.4} as before,
	\begin{align*}
		|T_{5,LH,H}| &\lesssim \int_{0}^{t} \iint_{|k| \ge \nu^{-\frac{1}{3}}} \langle c \lambda_k s \rangle^J \langle k \rangle^m \gamma_k \| y \omega_k \|_2 \langle c \lambda_{k'} s \rangle^J \langle k' \rangle^m \| y \nabla_{k'} \omega_{k'} \|_2 \| \nabla_{k-k'}^\bot \psi_{k-k'} \|_\infty \dd k \dd k' \dd s\\
		&\lesssim \int_{0}^{t} \left( \int_{|k| \ge \nu^{-\frac{1}{3}}} \langle c \lambda_k s \rangle^{2J} \langle k \rangle^{2m} \gamma_k^2 \| y \omega_k \|_2^2 \dd k \right)^\frac{1}{2} \left( \int_{\mathbb{R}} \langle c \lambda_k s \rangle^{2J} \langle k \rangle^{2m} \| y \nabla_k \omega_k \|_2^2 \dd k \right)^\frac{1}{2} \int_{\mathbb{R}} \| \nabla_k \psi_k \|_\infty \dd k \dd s\\
		&\lesssim \int_{0}^{t} \left( \int_{|k| \ge \nu^{-\frac{1}{3}}} \langle c \lambda_k s \rangle^{2J} \langle k \rangle^{2m} \gamma_k^2 (\beta_k |k|^2)^{-1} D_k \dd k \right)^\frac{1}{2} \left( \int_{\mathbb{R}} \langle c \lambda_k s \rangle^{2J} \langle k \rangle^{2m} (\gamma_k \nu)^{-1}  D_k \dd k \right)^\frac{1}{2} \mathcal{E}^\frac{1}{2} \dd s.
	\end{align*}
	Notice that if \(|k| \ge \nu^{-\frac{1}{3}}\), then \(\gamma_k^2 (\beta_k |k|^2)^{-1} = \nu^{-1}\), which implies
	\[|T_{5,LH,H}| \lesssim \int_{0}^{t} (\nu^{-1} \widetilde{\mathcal{D}})^\frac{1}{2} (\nu^{-\frac{1}{3}} \widetilde{\mathcal{D}})^\frac{1}{2} \mathcal{E}^\frac{1}{2} \dd s \lesssim \nu^{-\frac{2}{3}} \mathcal{D}(t) \sup_{s \in [0,t]} \mathcal{E}(s)^\frac{1}{2}.\]
	Now we treat \(T_{5,LH,L}\). Since \(|k| < \nu^{-\frac{1}{3}}\), we have \(\gamma_k = \nu^{-\frac{2}{3}}\). Hence
	\begin{align*}
		|T_{5,LH,L}| &\lesssim \int_{0}^{t} \iint_{\mathbb{R}^2} \langle c \lambda_k s \rangle^J \langle k \rangle^m \nu^{-\frac{1}{3}} \gamma_k^\frac{1}{2} \| y \omega_k \|_2 \langle c \lambda_{k'} s \rangle^J \langle k' \rangle^m \| y \nabla_{k'} \omega_{k'} \|_2 \| \nabla_{k-k'}^\bot \psi_{k-k'} \|_\infty \dd k \dd k' \dd s\\
		&\lesssim \int_{0}^{t} \left( \int_{\mathbb{R}} \langle c \lambda_k s \rangle^{2J} \langle k \rangle^{2m} \nu^{-\frac{2}{3}} \gamma_k \| y \omega_k \|_2^2 \dd k \right)^\frac{1}{2} \left( \int_\mathbb{R} \langle c \lambda_k s \rangle^{2J} \langle k \rangle^{2m} \| y \nabla_k \omega_k \|_2^2 \dd k \right)^\frac{1}{2} \int_{\mathbb{R}} \| \nabla_k \psi_k \|_\infty \dd k \dd s\\
		&\lesssim \nu^{-\frac{1}{2}} \int_{0}^{t} \mathcal{E}^\frac{1}{2} \widetilde{\mathcal{D}}^\frac{1}{2} \int_{\mathbb{R}} \| \nabla_k \psi_k \|_\infty \dd k \dd s.
	\end{align*}
	Then we use H\"{o}lder's inequality in \(s\) and Lemma \ref{lem 3.5},
	\[|T_{5,LH,L}| \lesssim \nu^{-\frac{1}{2}} \sup_{s \in [0,t]} \mathcal{E}(s)^\frac{1}{2} \left( \int_{0}^{t} \widetilde{\mathcal{D}} \dd s \right)^\frac{1}{2} \left( \int_{0}^{t} \left( \int_{\mathbb{R}} \| \nabla_k \psi_k \|_\infty \dd k \right)^2 \dd s \right)^\frac{1}{2} \lesssim \nu^{-\frac{7}{6}} \mathcal{D}(t) \sup_{s \in [0,t]} \mathcal{E}(s)^\frac{1}{2}.\]
	With the LH terms bounded, we now turn to estimate the HL terms, starting with \(T_{5,HL,H''}\). We have \(\gamma_k \lesssim \gamma_{k-k'}\), since \(|k| \lesssim |k-k'|\). Thus
	\begin{align*}
		|T_{5,HL,H''}| &\lesssim \int_{0}^{t} \iint_{|k-k'| \ge \nu^{-\frac{1}{3}}} \langle c \lambda_k s \rangle^J \langle k \rangle^m \gamma_k^\frac{1}{2} \| y \omega_k \|_2 \langle c \lambda_{k-k'} s \rangle^J \langle k-k' \rangle^m \gamma_{k-k'}^\frac{1}{2} \| \nabla_{k-k'}^\bot \psi_{k-k'} \|_\infty \| y \nabla_{k'} \omega_{k'} \|_2  \dd k \dd k' \dd s\\
		&\lesssim \int_{0}^{t} \left( \int_{\mathbb{R}} \langle c \lambda_k s \rangle^{2J} \langle k \rangle^{2m} \gamma_k \| y \omega_k \|_2^2 \dd k \right)^\frac{1}{2} \left( \int_{|k| \ge \nu^{-\frac{1}{3}}} \langle c \lambda_k s \rangle^{2J} \langle k \rangle^{2m} \gamma_k \| \nabla_k \psi_k \|_\infty^2 \dd k \right)^\frac{1}{2} \int_{\mathbb{R}} \| y \nabla_k \omega_k \|_2 \dd k \dd s\\
		&\eqqcolon \int_{0}^{t} I_1 \cdot I_2 \cdot I_3 \dd s.
	\end{align*}
	According to the definition of \(\mathcal{E}\), we have \(I_1 \lesssim \mathcal{E}^\frac{1}{2}\). For \(I_2\), we use Gagliardo-Nirenberg-Sobolev inequality, together with \(\| k \nabla_k \psi_k \|_2 \lesssim \| \Delta_k \psi_k \|_2 = \| \omega_k \|_2, \| \partial_y \nabla_k \psi_k \| \lesssim \| \Delta_k \psi_k \|_2 = \| \omega_k \|_2\),
	\[I_2 \lesssim \left( \int_{|k| \ge \nu^{-\frac{1}{3}}} \langle c \lambda_k s \rangle^{2J} \langle k \rangle^{2m} \gamma_k |k|^{-1} |k| \| \nabla_k \psi_k \|_2 \| \partial_y \nabla_k \psi_k \|_2 \dd k \right)^\frac{1}{2} \lesssim \left( \int_{\mathbb{R}} \langle c \lambda_k s \rangle^{2J} \langle k \rangle^{2m} \gamma_k (\nu^{-\frac{1}{3}})^{-1} \| \omega_k \|_2^2 \dd k \right)^\frac{1}{2} \lesssim \nu^{-\frac{1}{3}} \widetilde{\mathcal{D}}^\frac{1}{2}.\]
	Turning to \(I_3\), by H\"{o}lder's inequality and \(\gamma_k \ge \nu^{-\frac{2}{3}}\), we obtain
	\[I_3 \lesssim \left( \int_{\mathbb{R}} \langle k \rangle^{-2m} (\gamma_k \nu)^{-1} \dd k \right)^\frac{1}{2} \left( \int_{\mathbb{R}} \langle k \rangle^{2m} \gamma_k \nu \| y \nabla_k \omega_k \|_2^2 \dd k \right)^\frac{1}{2} \lesssim \nu^{-\frac{1}{6}} \widetilde{\mathcal{D}}^\frac{1}{2}.\]
	Combining the three parts,
	\[|T_{5,HL,H''}| \lesssim \int_{0}^{t} \mathcal{E}^\frac{1}{2} \nu^{-\frac{1}{3}} \widetilde{\mathcal{D}}^\frac{1}{2} \nu^{-\frac{1}{6}} \widetilde{\mathcal{D}}^\frac{1}{2} \dd s \lesssim \nu^{-\frac{1}{2}} \mathcal{D}(t) \sup_{s \in [0,t]}\mathcal{E}(s)^\frac{1}{2}.\]
	We note that if \(|k-k'| > \frac{|k|}{2}\), then \(|k'| \lesssim |k-k'|\). For \(T_{5,HL,L''}\), \(|k| < 2|k-k'| < 2\nu^{-\frac{1}{3}}\) holds on the domain of integration, which implies \(\gamma_k \approx \nu^{-\frac{2}{3}}\). As a result,
	\begin{align*}
		|T_{5,HL,L''}| \! &\lesssim \! \int_{0}^{t} \! \iint_{|k'| \lesssim \nu^{-\frac{1}{3}}} \! \langle c \lambda_k s \rangle^J \langle k \rangle^m \nu^{-\frac{1}{3}} \gamma_k^\frac{1}{2} \| y \omega_k \|_2 \langle c \lambda_{k-k'} s \rangle^J \langle k-k' \rangle^m |k-k'|^\frac{1}{2} \| \nabla_{k-k'}^\bot \psi_{k-k'} \|_\infty |k'|^{-\frac{1}{2}} \| y \nabla_{k'} \omega_{k'} \|_2 \dd k \dd k' \dd s\\
		&\lesssim \int_{0}^{t} \left( \int_\mathbb{R} \langle c \lambda_k s \rangle^{2J} \langle k \rangle^{2m} \nu^{-\frac{2}{3}} \gamma_k \| y \omega_k \|_2^2 \dd k \right)^\frac{1}{2} \left( \int_{\mathbb{R}} \langle c \lambda_k s \rangle^{2J} \langle k \rangle^{2m} |k| \| \nabla_k \psi_k \|_\infty^2 \dd k \right)^\frac{1}{2} \int_{|k| \lesssim \nu^{-\frac{1}{3}}} |k|^{-\frac{1}{2}} \| y \nabla_k \omega_k \|_2 \dd k \dd s\\
		&\lesssim \sup_{s \in [0,t]} ( \nu^{-\frac{2}{3}} \mathcal{E}(s))^\frac{1}{2} \left( \int_{0}^{t} \nu^{-\frac{1}{3}} \widetilde{\mathcal{D}} \dd s \right)^\frac{1}{2} \left( \int_{0}^{t} \left( \int_{|k| \lesssim \nu^{-\frac{1}{3}}} |k|^{-\frac{1}{2}} \| y \nabla_k \omega_k \|_2 \dd k \right)^2 \dd s \right)^\frac{1}{2},
	\end{align*}
	where we have used Lemma \ref{lem 3.6}. For the last term, using Minkowski’s inequality for integrals,
	\begin{align*}
		\left( \int_{0}^{t} \left( \int_{|k| \lesssim \nu^{-\frac{1}{3}}} |k|^{-\frac{1}{2}} \| y \nabla_k \omega_k \|_2 \dd k \right)^2 \dd s \right)^\frac{1}{2} &\lesssim \int_{|k| \lesssim \nu^{-\frac{1}{3}}} \left( \int_{0}^{t} \left( |k|^{-\frac{1}{2}} \| y \nabla_k \omega_k \|_2 \right)^2 \dd s \right)^\frac{1}{2} \dd k\\
		&\lesssim \int_{|k| \lesssim \nu^{-\frac{1}{3}}} \nu^{-\frac{1}{2}} |k|^{-\frac{1}{2}} \dd k \cdot \sup_{k \in \mathbb{R}} \left( \int_{0}^{t} \nu \| y \nabla_k \omega_k \|_2^2 \dd s \right)^\frac{1}{2} \\
		&\lesssim \nu^{-\frac{2}{3}} \mathcal{D}^\frac{1}{2},
	\end{align*}
	which implies
	\[|T_{5,HL,L''}| \lesssim \nu^{-\frac{7}{6}} \mathcal{D}(t) \sup_{s \in [0,t]} \mathcal{E}(s)^\frac{1}{2}.\]
	
	\subsection{Bound on \(T_6\)}
	
	By integration by parts,
	\[T_6 = -2 c_\gamma \int_{0}^{t} \int_{\mathbb{R}} \frac{\langle c \lambda_k s \rangle^{2J}}{M_k(s)} \langle k \rangle^{2m} \gamma_k \re \langle \psi_k , NL_k \rangle \dd k \dd s.\]
	We use the frequency decomposition
	\begin{align*}
		1 &= 1_{|k-k'| \le \frac{|k|}{2}} 1_{|k| \ge \nu^{-\frac{1}{3}}} + 1_{|k-k'| \le \frac{|k|}{2}} 1_{|k| < \nu^{-\frac{1}{3}}} + 1_{|k-k'| > \frac{|k|}{2}} 1_{|k| \ge \nu^{-\frac{1}{3}}}\\
		&+ 1_{|k-k'| > \frac{|k|}{2}} 1_{|k| < \nu^{-\frac{1}{3}}} 1_{|k| \le |k'|} + 1_{|k-k'| > \frac{|k|}{2}} 1_{|k| < \nu^{-\frac{1}{3}}} 1_{|k| > |k'|},
	\end{align*}
	and correspondingly we write \(T_6 \eqqcolon T_{6,LH,H} + T_{6,LH,L} + T_{6,HL,H} + T_{6,HL,L,LH} + T_{6,HL,L,HL}\). For \(T_{6,LH,H}\), by Cauchy-Schwarz inequality in \(y\) and Young's convolution inequality,
	\begin{align*}
		|T_{6,LH,H}| &\lesssim \int_{0}^{t} \iint_{|k| \ge \nu^{-\frac{1}{3}}} \langle c \lambda_k s \rangle^J \langle k \rangle^m \gamma_k \| \psi_k \|_2 \langle c \lambda_{k'} s \rangle^J \langle k' \rangle^m \| \nabla_{k'} \omega_{k'} \|_2 \| \nabla_{k-k'}^\bot \psi_{k-k'} \|_\infty \dd k \dd k' \dd s\\
		&\lesssim \int_{0}^{t} \left( \int_{|k| \ge \nu^{-\frac{1}{3}}} \langle c \lambda_k s \rangle^{2J} \langle k \rangle^{2m} \gamma_k^2 \| \psi_k \|_2^2 \dd k \right)^\frac{1}{2} \left( \int_{\mathbb{R}} \langle c \lambda_k s \rangle^{2J} \langle k \rangle^{2m} \| \nabla_k \omega_k \|_2^2 \dd k \right)^\frac{1}{2} \int_{\mathbb{R}} \| \nabla_k \psi_k \|_\infty \dd k \dd s.
	\end{align*}
	If \(|k| \ge \nu^{-\frac{1}{3}}\), then \(\beta_k = |k|^{-1}, \gamma_k = \nu^{-\frac{1}{2}}|k|^\frac{1}{2}\). Thus for the first term, we have
	\begin{align*}
		\int_{|k| \ge \nu^{-\frac{1}{3}}} \langle c \lambda_k s \rangle^{2J} \langle k \rangle^{2m} \gamma_k^2 \| \psi_k \|_2^2 \dd k &= \int_{|k| \ge \nu^{-\frac{1}{3}}} \langle c \lambda_k s \rangle^{2J} \langle k \rangle^{2m} \beta_k |k| (\nu^{-\frac{1}{2}}|k|^\frac{1}{2})^2 \| \psi_k \|_2^2 \dd k\\
		&\lesssim \int_\mathbb{R} \langle c \lambda_k s \rangle^{2J} \langle k \rangle^{2m} \nu^{-1} (\nu^{-\frac{1}{3}})^{-2} \beta_k |k|^2 \| k \psi_k \|_2^2 \dd k\\
		&\lesssim \nu^{-\frac{1}{3}} \widetilde{\mathcal{D}},
	\end{align*}
	which implies
	\[|T_{6,LH,H}| \lesssim \int_{0}^{t} (\nu^{-\frac{1}{3}} \widetilde{\mathcal{D}})^\frac{1}{2} (\nu^{-1} \widetilde{\mathcal{D}})^\frac{1}{2} \mathcal{E}^\frac{1}{2} \dd s \lesssim \nu^{-\frac{2}{3}} \mathcal{D}(t) \sup_{s \in [0,t]} \mathcal{E}(s)^\frac{1}{2},\]
	where we have used Lemma \ref{lem 3.4}. We treat \(T_{6,LH,L}\) by separating it into two parts,
	\begin{align*}
		T_{6,LH,L} =& 2 c_\gamma \int_{0}^{t} \int_{|k| < \nu^{-\frac{1}{3}}} \frac{\langle c \lambda_k s \rangle^{2J}}{M_k(s)} \langle k \rangle^{2m} \gamma_k \re \left\langle \psi_k, \int_{|k-k'| \le \frac{|k|}{2}} i(k-k') \psi_{k-k'} \partial_y \omega_{k'} \dd k' \right\rangle \dd k \dd s\\
		&- 2 c_\gamma \int_{0}^{t} \int_{|k| < \nu^{-\frac{1}{3}}} \frac{\langle c \lambda_k s \rangle^{2J}}{M_k(s)} \langle k \rangle^{2m} \gamma_k \re \left\langle \psi_k, \int_{|k-k'| \le \frac{|k|}{2}} \partial_y \psi_{k-k'} ik' \omega_{k'} \dd k' \right\rangle \dd k \dd s\\
		\eqqcolon& T_{6,LH,L}^x + T_{6,LH,L}^y.
	\end{align*}
	We use integration by parts to handle \(T_{6,LH,L}^x\),
	\[T_{6,LH,L}^x = -2 c_\gamma \int_{0}^{t} \iint_{|k-k'| \le \frac{|k|}{2}, |k| < \nu^{-\frac{1}{3}}} \frac{\langle c \lambda_k s \rangle^{2J}}{M_k(s)} \langle k \rangle^{2m} \gamma_k \re \langle \partial_y \psi_k \overline{\psi_{k-k'}} + \psi_k \partial_y \overline{\psi_{k-k'}}, i(k-k') \omega_{k'} \rangle \dd k \dd k' \dd s.\]
	By Cauchy-Schwarz inequality in \(y\),
	\begin{align*}
		|T_{6,LH,L}^x| &\lesssim \int_{0}^{t} \iint_{\mathbb{R}^2} \langle c \lambda_k s \rangle^{2J} \langle k \rangle^{2m} \nu^{-\frac{1}{3}} \gamma_k^\frac{1}{2} (\| \partial_y \psi_k \|_2 |k-k'| \| \psi_{k-k'} \|_\infty + |k| \| \psi_k \|_2 \| \partial_y \psi_{k-k'} \|_\infty) \| \omega_{k'} \|_2 \dd k \dd k' \dd s\\
		&\lesssim \int_{0}^{t} \iint_{\mathbb{R}^2} \langle c \lambda_k s \rangle^J \langle k \rangle^m \nu^{-\frac{1}{3}} \gamma_k^\frac{1}{2} \| \nabla_k \psi_k \|_2 \langle c \lambda_{k'} t \rangle^J \langle k' \rangle^m \| \omega_{k'} \|_2 \| \nabla_{k-k'} \psi_{k-k'} \|_\infty \dd k \dd k' \dd s,		
	\end{align*}
	Then using Young's convolution inequality, H\"{o}lder's inequality in \(s\) and Lemma \ref{lem 3.5}, we obtain
	\begin{align*}
		|T_{6,LH,L}^x| &\lesssim \int_{0}^{t} \left( \int_{\mathbb{R}} \langle c \lambda_k s \rangle^{2J} \langle k \rangle^{2m} \nu^{-\frac{2}{3}} \gamma_k \| \nabla_k \psi_k \|_2^2 \dd k \right)^\frac{1}{2} \left( \int_\mathbb{R} \langle c \lambda_k s \rangle^{2J} \langle k \rangle^{2m} \| \omega_k \|_2^2 \dd k \right)^\frac{1}{2} \int_{\mathbb{R}} \| \nabla_k \psi_k \|_\infty \dd k \dd s\\
		&\lesssim \sup_{s \in [0,t]} (\nu^{-\frac{2}{3}} \mathcal{E}(s))^\frac{1}{2} \left( \int_{0}^{t} (\gamma_k \nu)^{-1} \widetilde{\mathcal{D}} \dd s \right)^\frac{1}{2} \left( \int_{0}^{t} \left( \int_{\mathbb{R}} \| \nabla_k \psi_k \|_\infty \dd k \right)^2 \dd s \right)^\frac{1}{2}\\
		&\lesssim \nu^{-\frac{7}{6}} \mathcal{D}(t) \sup_{s \in [0,t]} \mathcal{E}(s)^\frac{1}{2}.
	\end{align*}
	Turning to \(T_{6,LH,L}^y\), similarly to \(T_{6,LH,L}^x\), we have
	\begin{align*}
		|T_{6,LH,L}^y| &\lesssim \int_{0}^{t} \iint_{\mathbb{R}^2} \langle c \lambda_k s \rangle^J \langle k \rangle^m \nu^{-\frac{1}{3}} \gamma_k^\frac{1}{2} \| \nabla_k \psi_k \|_2 \langle c \lambda_{k'} s \rangle^J \langle k' \rangle^m \| \omega_{k'} \|_2 \| \nabla_{k-k'} \psi_{k-k'} \|_\infty \dd k \dd k' \dd s\\
		&\lesssim \nu^{-\frac{7}{6}} \mathcal{D}(t) \sup_{s \in [0,t]} \mathcal{E}(s)^\frac{1}{2}.
	\end{align*}
	We turn our attention now to the HL terms, starting with \(T_{6,HL,H}\). We use Cauchy-Schwarz inequality in \(y\), Young's convolution inequality as before,
	\begin{align*}
		|T_{6,HL,H}| &\lesssim \int_{0}^{t} \iint_{|k| \ge \nu^{-\frac{1}{3}}} \langle c \lambda_k s \rangle^J \langle k \rangle^m \gamma_k \| \psi_k \|_2 \langle c \lambda_{k-k'} s \rangle^J \langle k-k' \rangle^m \| \nabla_{k-k'}^\bot \psi_{k-k'} \|_2 \| \nabla_{k'} \omega_{k'} \|_\infty \dd k \dd k' \dd s\\
		&\lesssim \int_{0}^{t} \left( \int_{|k| \ge \nu^{-\frac{1}{3}}} \langle c \lambda_k s \rangle^{2J} \langle k \rangle^{2m} \gamma_k^2 \| \psi_k \|_2^2 \dd k \right)^\frac{1}{2} \left( \int_{\mathbb{R}} \langle c \lambda_k s \rangle^{2J} \langle k \rangle^{2m} \| \nabla_k \psi_k \|_2^2 \dd k \right)^\frac{1}{2} \int_{\mathbb{R}} \| \nabla_k \omega_k \|_\infty \dd k \dd s.
	\end{align*}
	As the first term of \(|T_{6,LH,H}|\), we have proved that
	\[\int_{|k| \ge \nu^{-\frac{1}{3}}} \langle c \lambda_k s \rangle^{2J} \langle k \rangle^{2m} \gamma_k^2 \| \psi_k \|_2^2 \dd k \le \nu^{-\frac{1}{3}} \widetilde{\mathcal{D}}.\]
	Combining Lemma \ref{lem 3.4}, we obtain
	\[|T_{6,HL,H}| \lesssim \int_{0}^{t} (\nu^{-\frac{1}{3}} \widetilde{\mathcal{D}})^\frac{1}{2} (\nu^\frac{2}{3} \mathcal{E})^\frac{1}{2} \nu^{-\frac{5}{6}} \widetilde{\mathcal{D}}^\frac{1}{2} \dd s \lesssim \nu^{-\frac{2}{3}} \mathcal{D}(t) \sup_{s \in [0,t]} \mathcal{E}(s)^\frac{1}{2}.\]
	Turning to \(T_{6,HL,L,LH}\), we separate it into two parts, \(T_{6,HL,L,LH}^x\) and \(T_{6,HL,L,LH}^y\), in the same way as we handle \(T_{6,LH,L}\). For \(T_{6,HL,L,LH}^x\), we use integration by parts and separate it into two parts,
	\begin{align*}
		T_{6,HL,L,LH}^x =& -2 c_\gamma \int_{0}^{t} \iint_{|k-k'| > \frac{|k|}{2}, |k| < \nu^{-\frac{1}{3}}, |k| \le |k'|} \frac{\langle c \lambda_k s \rangle^{2J}}{M_k(s)} \langle k \rangle^{2m} \gamma_k \re \langle \partial_y \psi_k \overline{\psi_{k-k'}}, i(k-k') \omega_{k'} \rangle \dd k \dd k' \dd s\\
		&-2 c_\gamma \int_{0}^{t} \iint_{|k-k'| > \frac{|k|}{2}, |k| < \nu^{-\frac{1}{3}}, |k| \le |k'|} \frac{\langle c \lambda_k s \rangle^{2J}}{M_k(s)} \langle k \rangle^{2m} \gamma_k \re \langle \psi_k \partial_y \overline{\psi_{k-k'}}, i(k-k') \omega_{k'} \rangle \dd k \dd k' \dd s\\
		\eqqcolon& T_{6,HL,L,LH}^{x,1} + T_{6,HL,L,LH}^{x,2}.
	\end{align*}
	We treat \(T_{6,HL,L,LH}^{x,1}\) first. Applying Cauchy-Schwarz inequality in \(y\), Young's convolution inequality, H\"{o}lder's inequality in \(s\) and Lemma \ref{lem 3.5}, we obatain
	\begin{align*}
		|T_{6,HL,L,LH}^{x,1}| &\lesssim \int_{0}^{t} \iint_{\mathbb{R}^2} \langle c \lambda_{k-k'} s \rangle^J \langle k-k' \rangle^m \gamma_{k-k'}^\frac{1}{2} \| (k-k') \psi_{k-k'} \|_2 \langle c \lambda_{k'} s \rangle^J \langle k' \rangle^m \gamma_{k'}^\frac{1}{2} \| \omega_{k'} \|_2 \| \partial_y \psi_k \|_\infty \dd k \dd k' \dd s\\
		&\lesssim \int_{0}^{t} \left( \int_{\mathbb{R}} \langle c \lambda_k s \rangle^{2J} \langle k \rangle^{2m} \gamma_k \| \nabla_k \psi_k \|_2^2 \dd k \right)^\frac{1}{2} \left( \int_{\mathbb{R}} \langle c \lambda_k s \rangle^{2J} \langle k \rangle^{2m} \gamma_k \| \omega_k \|_2^2 \dd k \right)^\frac{1}{2} \int_{\mathbb{R}} \| \nabla_k \psi_k \|_\infty \dd k \dd s\\
		&\lesssim \sup_{s \in [0,t]} \mathcal{E}(s)^\frac{1}{2} \left( \int_{0}^{t} \nu^{-1} \widetilde{\mathcal{D}} \dd s \right)^\frac{1}{2} \left( \int_{0}^{t} \left( \int_{\mathbb{R}} \| \nabla_k \psi_k \|_\infty \dd k \right)^2 \dd s \right)^\frac{1}{2}\\
		&\lesssim \nu^{-\frac{7}{6}} \mathcal{D}(t) \sup_{s \in [0,t]} \mathcal{E}(s)^\frac{1}{2}.
	\end{align*}
	Turning to \(T_{6,HL,L,LH}^{x,2}\), we use Cauchy-Schwarz inequality in \(y\), Young's convolution inequality, together with \(\| k \partial_y \psi_k \|_2 \lesssim \| \Delta_k \psi_k \|_2 = \| \omega_k \|_2\),
	\begin{align*}
		|T_{6,HL,L,LH}^{x,2}| &\lesssim \int_{0}^{t} \iint_{|k| < \nu^{-\frac{1}{3}}} \langle c \lambda_{k-k'} s \rangle^J \langle k-k' \rangle^m \gamma_{k-k'}^\frac{1}{2} \| (k-k') \partial_y \psi_{k-k'} \|_2 \langle c \lambda_{k'} s \rangle^J \langle k' \rangle^m \gamma_{k'}^\frac{1}{2} \| \omega_{k'} \|_2 \| \psi_k \|_\infty \dd k \dd k' \dd s\\
		&\lesssim \int_{0}^{t} \left( \int_{\mathbb{R}} \langle c \lambda_k s \rangle^{2J} \langle k \rangle^{2m} \gamma_k \| \omega_k \|_2^2 \dd k \right)^\frac{1}{2} \left( \int_{\mathbb{R}} \langle c \lambda_k s \rangle^{2J} \langle k \rangle^{2m} \gamma_k \| \omega_k \|_2^2 \dd k \right)^\frac{1}{2} \int_{|k| < \nu^{-\frac{1}{3}}} \| \psi_k \|_\infty \dd k \dd s.
	\end{align*}
	To estimate the last term, we use Gagliardo-Nirenberg-Sobolev inequality,
	\[\int_{|k| < \nu^{-\frac{1}{3}}} \| \psi_k \|_\infty \dd k \lesssim \int_{|k| < \nu^{-\frac{1}{3}}} |k|^{-\frac{1}{2}} |k|^\frac{1}{2} \| \psi_k \|_2^\frac{1}{2} \| \partial_y \psi_k \|_2^\frac{1}{2} \dd k \lesssim \int_{|k| < \nu^{-\frac{1}{3}}} |k|^{-\frac{1}{2}} \dd k \cdot \sup_{k \in \mathbb{R}} \| \nabla_k \psi_k \|_2 \lesssim \nu^{-\frac{1}{6}} \mathcal{E}^\frac{1}{2}.\]
	Thus we obtain
	\[|T_{6,HL,L,LH}^{x,2}| \lesssim \int_{0}^{t} \nu^{-1} \widetilde{\mathcal{D}} \nu^{-\frac{1}{6}} \mathcal{E}^\frac{1}{2} \dd s \lesssim \nu^{-\frac{7}{6}} \mathcal{D}(t) \sup_{s \in [0,t]} \mathcal{E}(s)^\frac{1}{2}.\]
	For \(T_{6,HL,L,LH}^y\), similarly to \(T_{6,HL,L,LH}^{x,2}\), we have
	\begin{align*}
		|T_{6,HL,L,LH}^y| &\lesssim \int_{0}^{t} \iint_{|k| < \nu^{-\frac{1}{3}}} \langle c \lambda_{k-k'} s \rangle^J \langle k-k' \rangle^m \gamma_{k-k'}^\frac{1}{2} \| (k-k') \partial_y \psi_{k-k'} \|_2 \langle c \lambda_{k'} s \rangle^J \langle k' \rangle^m \gamma_{k'}^\frac{1}{2} \| \omega_{k'} \|_2 \nu^{-\frac{2}{3}} \| \psi_k \|_\infty \dd k \dd k' \dd s\\
		&\lesssim \nu^{-\frac{7}{6}} \mathcal{D}(t) \sup_{s \in [0,t]} \mathcal{E}(s)^\frac{1}{2},
	\end{align*}
	where we have used \(|k'| \lesssim |k-k'|\) since \(|k-k'| > \frac{|k|}{2}\). Now we treat the last term \(T_{6,HL,L,HL}\). As before, we separate it into two parts, \(T_{6,HL,L,HL} \eqqcolon T_{6,HL,L,HL}^x + T_{6,HL,L,HL}^y\). We still separate \(T_{6,HL,L,HL}^x\) into two parts by integration by parts,
	\begin{align*}
		T_{6,HL,L,HL}^x =& -2 c_\gamma \int_{0}^{t} \iint_{|k-k'| > \frac{|k|}{2}, |k| < \nu^{-\frac{1}{3}}, |k| > |k'|} \frac{\langle c \lambda_k s \rangle^{2J}}{M_k(s)} \langle k \rangle^{2m} \gamma_k \re \langle \partial_y \psi_k \overline{\psi_{k-k'}}, i(k-k') \omega_{k'} \rangle \dd k \dd k' \dd s\\
		&-2 c_\gamma \int_{0}^{t} \iint_{|k-k'| > \frac{|k|}{2}, |k| < \nu^{-\frac{1}{3}}, |k| > |k'|} \frac{\langle c \lambda_k s \rangle^{2J}}{M_k(t)} \langle k \rangle^{2m} \gamma_k \re \langle \psi_k \partial_y \overline{\psi_{k-k'}}, i(k-k') \omega_{k'} \rangle \dd k \dd k' \dd s\\
		\eqqcolon& T_{6,HL,L,HL}^{x,1} + T_{6,HL,L,HL}^{x,2}.
	\end{align*}
	To estimate \(T_{6,HL,L,HL}^{x,1}\), notice \(|k'| \lesssim |k-k'|\) on the domain of integration, and we apply Cauchy-Schwarz inequality in \(y\), Young's convolution inequality, H\"{o}lder's inequality in \(s\) and Lemma \ref{lem 3.6},
	\begin{align*}
		|T_{6,HL,L,HL}^{x,1}| \! &\lesssim \! \int_{0}^{t} \! \iint_{|k'| \lesssim \nu^{-\frac{1}{3}}} \! \langle c \lambda_k s \rangle^J \langle k \rangle^m \nu^{-\frac{1}{3}} \gamma_k^\frac{1}{2} \| \partial_y \psi_k \|_2 \langle c \lambda_{k-k'} s \rangle^J \langle k-k' \rangle^m |k-k'|^\frac{1}{2} \| (k-k') \psi_{k-k'} \|_\infty |k'|^{-\frac{1}{2}} \| \omega_{k'} \|_2 \dd k \dd k' \dd s\\
		&\lesssim \int_{0}^{t} \left( \int_{\mathbb{R}} \langle c \lambda_k s \rangle^{2J} \langle k \rangle^{2m} \nu^{-\frac{2}{3}} \gamma_k \| \nabla_k \psi_k \|_2 \dd k \right)^\frac{1}{2} \left( \int_{\mathbb{R}} \langle c \lambda_k s \rangle^{2J} \langle k \rangle^{2m} |k| \| \nabla_k \psi_k \|_\infty^2 \dd k \right)^\frac{1}{2} \int_{|k| \lesssim \nu^{-\frac{1}{3}}} |k|^{-\frac{1}{2}} \| \omega_k \|_2 \dd k \dd s\\
		&\lesssim \sup_{s \in [0,t]} (\nu^{-\frac{2}{3}} \mathcal{E}(s))^\frac{1}{2} \left( \int_{0}^{t} \nu^{-\frac{1}{3}} \widetilde{\mathcal{D}} \dd s \right)^\frac{1}{2} \left( \int_{0}^{t} \left( \int_{|k| \lesssim \nu^{-\frac{1}{3}}} |k|^{-\frac{1}{2}} \| \omega_k \|_2 \dd k \right)^2 \dd s \right)^\frac{1}{2}.
	\end{align*}
	For the last term, by Minkowski’s inequality for integrals,
	\begin{align*}
		\left( \int_{0}^{t} \left( \int_{|k| \lesssim \nu^{-\frac{1}{3}}} |k|^{-\frac{1}{2}} \| \omega_k \|_2 \dd k \right)^2 \dd s \right)^\frac{1}{2} &\lesssim \int_{|k| \lesssim \nu^{-\frac{1}{3}}} \left( \int_{0}^{t} \left( |k|^{-\frac{1}{2}} \| \omega_k \|_2 \right)^2 \dd s \right)^\frac{1}{2} \dd k\\
		&\lesssim \int_{|k| \lesssim \nu^{-\frac{1}{3}}} \nu^{-\frac{1}{2}} |k|^{-\frac{1}{2}} \dd k \cdot \sup_{k \in \mathbb{R}} \left( \int_{0}^{t} \nu \| \omega_k \|_2^2 \right)^\frac{1}{2}\\
		&\lesssim \nu^{-\frac{2}{3}} \mathcal{D}^\frac{1}{2}.
	\end{align*}
	Thus
	\[|T_{6,HL,L,HL}^{x,1}| \lesssim \nu^{-\frac{7}{6}} \mathcal{D}(t) \sup_{s \in [0,t]} \mathcal{E}(s)^\frac{1}{2}.\]
	To handle \(T_{6,HL,L,HL}^{x,2}\), still using Cauchy-Schwarz inequality in \(y\), Young's convolution inequality, H\"{o}lder's inequality in \(s\), together with \(\| k \partial_y \psi_k \|_2 \lesssim \| \Delta_k \psi_k \|_2 = \| \omega_k \|_2\),
	\begin{align*}
		|T_{6,HL,L,HL}^{x,2}| &\lesssim \int_{0}^{t} \iint_{|k'| \lesssim \nu^{-\frac{1}{3}}} \langle c \lambda_k s \rangle^J \langle k \rangle^m \nu^{-\frac{1}{3}} \gamma_k^\frac{1}{2} |k|^\frac{1}{2} \| \psi_k \|_\infty \langle c \lambda_{k-k'} s \rangle^J \langle k-k' \rangle^m \| (k-k') \partial_y \psi_{k-k'} \|_2 |k'|^{-\frac{1}{2}} \| \omega_{k'} \|_2 \dd k \dd k' \dd s\\
		&\lesssim \int_{0}^{t} \left( \int_{\mathbb{R}} \langle c \lambda_k s \rangle^{2J} \langle k \rangle^{2m} \nu^{-\frac{2}{3}} \gamma_k |k| \| \psi_k \|_\infty^2 \dd k \right)^\frac{1}{2} \left( \int_{\mathbb{R}} \langle c \lambda_k s \rangle^{2J} \langle k \rangle^{2m} \| \omega_k \|_2^2 \dd k \right)^\frac{1}{2} \int_{|k| \lesssim \nu^{-\frac{1}{3}}} |k|^{-\frac{1}{2}} \| \omega_k \|_2 \dd k \dd s\\
		&\lesssim \sup_{s \in [0,t]} \left( \int_{\mathbb{R}} \langle c \lambda_k s \rangle^{2J} \langle k \rangle^{2m} \nu^{-\frac{2}{3}} \gamma_k |k| \| \psi_k \|_\infty^2 \dd k \right)^\frac{1}{2} \left( \int_{0}^{t} \nu^{-\frac{1}{3}} \widetilde{\mathcal{D}} \dd s \right)^\frac{1}{2} \left( \int_{0}^{t} \left( \int_{|k| \lesssim \nu^{-\frac{1}{3}}} |k|^{-\frac{1}{2}} \| \omega_k \|_2 \dd k \right)^2 \dd s \right)^\frac{1}{2}.
	\end{align*}
	By Gagliardo-Nirenberg-Sobolev inequality, the first term obeys
	\[\int_{\mathbb{R}} \langle c \lambda_k s \rangle^{2J} \langle k \rangle^{2m} \nu^{-\frac{2}{3}} \gamma_k |k| \| \psi_k \|_\infty^2 \dd k \lesssim \int_{\mathbb{R}} \langle c \lambda_k s \rangle^{2J} \langle k \rangle^{2m} \nu^{-\frac{2}{3}} \gamma_k |k| \| \psi_k \|_2 \| \partial_y \psi_k \|_2 \dd k \lesssim \nu^{-\frac{2}{3}} \mathcal{E}.\]
	For the third term, it is the same as the third term of \(|T_{6,HL,L,HL}^{x,1}|\), so we obtain
	\[|T_{6,HL,L,HL}^{x,2}| \lesssim \nu^{-\frac{7}{6}} \mathcal{D}(t)\sup_{s \in [0,t]} \mathcal{E}(s)^\frac{1}{2}.\]
	The estimate of \(T_{6,HL,L,HL}^y\) is similar to \(T_{6,HL,L,HL}^{x,2}\), and then we have
	\[|T_{6,HL,L,HL}^y| \lesssim \nu^{-\frac{7}{6}} \mathcal{D}(t)\sup_{s \in [0,t]} \mathcal{E}(s)^\frac{1}{2}.\]
	
	\subsection{Bound on \(T_7\)}
	
	Bounding the term is the simplest. We use Cauchy-Schwarz inequality in \(y\),
	\[|T_7| \lesssim \sup_{k \in \mathbb{R}} \int_{0}^{t} \int_{\mathbb{R}} \| y \omega_k \|_2 \| \nabla_{k-k'}^\bot \psi_{k-k'} \|_\infty \| y \nabla_{k'} \omega_{k'} \|_2 \dd k' \dd s.\]
	By Fubini's theorem, we are able to integrate with respect to \(s\) first, and then apply H\"{o}lder's inequality for the integration over \(s\),
	\[|T_7| \lesssim \sup_{k \in \mathbb{R}} \int_{\mathbb{R}} \sup_{s \in [0,t]} \| y \omega_k \|_2 \left( \int_{0}^{t} \| \nabla_{k-k'} \psi_{k-k'} \|_\infty^2 \dd s \right)^\frac{1}{2} \left( \int_{0}^{t} \| y \nabla_{k'} \omega_{k'} \|_2^2 \dd s \right)^\frac{1}{2} \dd k'.\]
	We use Young's convolution inequality to place the \(k\) and \(k'\) factors into \(L^\infty\) and the \(k-k'\) factor into \(L^1\),
	\[|T_7| \lesssim \sup_{k \in \mathbb{R}} \sup_{s \in [0,t]} \| y \omega_k \|_2 \cdot \int_{\mathbb{R}} \left( \int_{0}^{t} \| \nabla_k \psi_k \|_\infty^2 \dd s \right)^\frac{1}{2} \dd k \cdot \sup_{k \in \mathbb{R}} \left( \int_{0}^{t} \| y \nabla_k \omega_k \|_2^2 \dd s \right)^\frac{1}{2}.\]
	Combining Lemma \ref{lem 3.5}, we have
	\[|T_7| \lesssim \nu^{-\frac{7}{6}} \mathcal{D}(t) \sup_{s \in [0,t]} \mathcal{E}(s)^\frac{1}{2}.\]
	
	\subsection{Bound on \(T_8\)}
	
	We handle \(T_8\) using integration by parts,
	\begin{align*}
		T_8 &= 4 \sup_{k \in \mathbb{R}} \int_{0}^{t} - \re \langle \psi_k , NL_k \rangle \dd s\\
		&= 4 \sup_{k \in \mathbb{R}} \int_{0}^{t} \re \left\langle \psi_k, \int_{\mathbb{R}} i(k-k') \psi_{k-k'} \partial_y \omega_{k'} \dd k' \right\rangle - \re \left\langle \psi_k, \int_{\mathbb{R}} \partial_y \psi_{k-k'} ik' \omega_{k'} \dd k' \right\rangle \dd s\\
		&= 4 \sup_{k \in \mathbb{R}} \int_{0}^{t} \int_\mathbb{R} - \re \langle \partial_y \psi_k \overline{\psi_{k-k'}} + \psi_k \partial_y \overline{\psi_{k-k'}},  i(k-k') \omega_{k'} \rangle - \re \left\langle \psi_k, \partial_y \psi_{k-k'} ik' \omega_{k'} \right\rangle \dd k' \dd s\\
		&= 4 \sup_{k \in \mathbb{R}} \int_{0}^{t} \int_\mathbb{R} - \re \langle \partial_y \psi_k,  i(k-k') \psi_{k-k'} \omega_{k'} \rangle - \re \left\langle k \psi_k, i \partial_y \psi_{k-k'} \omega_{k'} \right\rangle \dd k' \dd s.
	\end{align*}
	By Cauchy-Schwarz inequality in \(y\), H\"{o}lder's inequality in \(s\), Young's convolution inequality and Lemma \ref{lem 3.5}, we have
	\begin{align*}
		|T_8| &\lesssim \sup_{k \in \mathbb{R}} \int_{\mathbb{R}} \int_{0}^{t} \| \nabla_k \psi_k \|_2 \| \nabla_{k-k'} \psi_{k-k'} \|_\infty \| \omega_{k'} \|_2 \dd s \dd k'\\
		&\lesssim \sup_{k \in \mathbb{R}} \int_{\mathbb{R}} \sup_{s \in [0,t]} \| \nabla_k \psi_k \|_2 \left( \int_{0}^{t} \| \nabla_{k-k'} \psi_{k-k'} \|_\infty^2 \dd s \right)^\frac{1}{2} \left( \int_{0}^{t} \| \omega_{k'} \|_2^2 \dd s \right)^\frac{1}{2} \dd k'\\
		&\lesssim \sup_{k \in \mathbb{R}} \sup_{s \in [0,t]} \| \nabla_k \psi_k \|_2 \cdot \int_{\mathbb{R}} \left( \int_{0}^{t} \| \nabla_k \psi_k \|_\infty^2 \dd s \right)^\frac{1}{2} \dd k \cdot \sup_{k \in \mathbb{R}} \left( \int_{0}^{t} \| \omega_k \|_2^2 \dd s \right)^\frac{1}{2}\\
		&\lesssim \nu^{-\frac{7}{6}} \mathcal{D}(t) \sup_{s \in [0,t]} \mathcal{E}(s)^\frac{1}{2}.
	\end{align*} 
	
	\subsection{Bound on \(\mathbb{NL}\)}
	
	Combining the estimates above, we obtain
	\[|\mathbb{NL}| \lesssim \nu^{-\frac{7}{6}} \mathcal{D}(t) \sup_{s \in [0,t]} \mathcal{E}(s)^\frac{1}{2}.\]
	Recalling \eqref{eq 3.2},we have the following theorem.
	
	\begin{theorem}
		\label{thm 3.7}
		There exists \(C > 0\) such that
		\begin{equation}
			\label{eq 3.5}
			\mathcal{E}(t) \le 2 \mathcal{E}(0) - 4c \mathcal{D}(t) + C \nu^{-\frac{7}{6}} \mathcal{D}(t) \sup_{s \in [0,t]} \mathcal{E}(s)^\frac{1}{2}.
		\end{equation}
		Thus if \(\mathcal{E}(0) \le c^2 C^{-2} \nu^\frac{7}{3}\), then
		\begin{equation}
			\label{eq 3.6}
			\sup_{t \in [0, +\infty)} \mathcal{E}(t) \le 2 \mathcal{E}(0).
		\end{equation}
	\end{theorem}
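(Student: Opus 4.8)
The plan is to read off \eqref{eq 3.5} from the work already assembled and then close a standard continuity (bootstrap) argument to obtain \eqref{eq 3.6}. First I would observe that \eqref{eq 3.5} is immediate: \eqref{eq 3.2} gives $\mathcal{E}(t) \le 2\mathcal{E}(0) - 4c\mathcal{D}(t) + \mathbb{NL}(t)$ with $c$ the fixed small constant furnished by Theorem \ref{thm 2.1} (chosen before $C$, hence independent of $\nu,J,m,\delta$), and summing the term-by-term estimates $|T_i| \lesssim \nu^{-\frac{7}{6}}\mathcal{D}(t)\sup_{s\in[0,t]}\mathcal{E}(s)^{\frac12}$ for $i=1,\dots,8$ — the borderline power $\nu^{-\frac{7}{6}}$ being produced by the low-frequency stream-function interactions ($T_{5,LH,L}$, $T_{5,HL,L''}$, the pieces of $T_6$ supported in $|k|<\nu^{-\frac13}$, $T_7$ and $T_8$), while all remaining terms carry a strictly better power of $\nu$ since $\nu<1$ — yields $|\mathbb{NL}(t)| \le C\nu^{-\frac{7}{6}}\mathcal{D}(t)\sup_{s\in[0,t]}\mathcal{E}(s)^{\frac12}$ for some absolute $C>0$. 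Substituting this into \eqref{eq 3.2} gives \eqref{eq 3.5}.

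Next I would run the bootstrap for \eqref{eq 3.6}. One may assume $\mathcal{E}(0)>0$, since $\mathcal{E}(0)=0$ forces $\|\nabla_k\psi_k(0)\|_2=0$ for every $k$, hence (using $\Delta_k\psi_k=\omega_k$) $\omega_{in}\equiv 0$, and then $\omega\equiv 0$ solves \eqref{eq 1.3}. Because $1\le M_k(t)\le e^{\pi J^2/4}$ and \eqref{eq 2.3} holds, $\mathcal{E}(t)$ is equivalent to a genuine anisotropic Sobolev norm of the solution, is continuous along the local solution, and is finite exactly when the solution does not blow up; it therefore suffices to establish the claimed bound on any interval of existence. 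Set
\[
T^\ast = \sup\{\,T\ge 0:\ \sup_{s\in[0,T]}\mathcal{E}(s)\le 4\mathcal{E}(0)\,\},
\]
which is positive by continuity together with $\mathcal{E}(0)<4\mathcal{E}(0)$. For $t<T^\ast$ the hypothesis $\mathcal{E}(0)\le c^2C^{-2}\nu^{\frac73}$ and the bootstrap bound give $\sup_{s\in[0,t]}\mathcal{E}(s)^{\frac12}\le 2\mathcal{E}(0)^{\frac12}\le 2cC^{-1}\nu^{\frac76}$, so $C\nu^{-\frac{7}{6}}\mathcal{D}(t)\sup_{s\in[0,t]}\mathcal{E}(s)^{\frac12}\le 2c\mathcal{D}(t)$ and \eqref{eq 3.5} collapses to
\[
\mathcal{E}(t)\le 2\mathcal{E}(0)-4c\mathcal{D}(t)+2c\mathcal{D}(t)=2\mathcal{E}(0)-2c\mathcal{D}(t)\le 2\mathcal{E}(0).
\]
In particular $\mathcal{E}(t)\le 2\mathcal{E}(0)<4\mathcal{E}(0)$ strictly for all $t<T^\ast$, hence by continuity $\mathcal{E}(T^\ast)\le 2\mathcal{E}(0)<4\mathcal{E}(0)$ as well; were $T^\ast$ finite, continuity would let us extend the defining inequality of $T^\ast$ slightly past $T^\ast$, contradicting maximality. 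Therefore $T^\ast=\infty$ and $\mathcal{E}(t)\le 2\mathcal{E}(0)$ for all $t\ge 0$, which is \eqref{eq 3.6}; in particular this a priori bound globalizes the local solution.

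Since the analytic heavy lifting — the decay estimate of Theorem \ref{thm 2.1} and the full chain of nonlinear bounds on $T_1,\dots,T_8$ that pins down the exponent $-\tfrac76$ (equivalently the threshold $\nu^{\frac73}$ after squaring) — is already in hand, I do not expect a hard analytic obstacle in this last step. The one point that must be treated carefully is the soft infrastructure of the bootstrap: local well-posedness in the function space measured by $\mathcal{E}$, continuity of $t\mapsto\mathcal{E}(t)$, and the uniform coercivity/boundedness of the multipliers $M_k$ and of the $E_k$-to-norm equivalence \eqref{eq 2.3}, so that the continuation criterion applies and the estimate genuinely propagates to all $t>0$.
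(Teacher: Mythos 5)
Your proposal is correct and follows essentially the same route as the paper: \eqref{eq 3.5} is read off from \eqref{eq 3.2} together with the bounds on \(T_1,\dots,T_8\), and \eqref{eq 3.6} is closed by the same numerical inequality \(C\nu^{-\frac{7}{6}}(4\mathcal{E}(0))^{\frac12}\le 2c\), the paper phrasing the continuity argument as a contradiction at the first time \(\mathcal{E}\) reaches \(\min\{4\mathcal{E}(0),\mathcal{E}(t_0)\}\) while you phrase it as a standard bootstrap with \(T^\ast\). Your explicit treatment of the \(\mathcal{E}(0)=0\) case and of the continuity/local well-posedness infrastructure is a harmless (indeed slightly more careful) addition to what the paper leaves implicit.
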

	
	\begin{proof}
		\eqref{eq 3.5} follows directly from \eqref{eq 3.2} and the estimate of \(\mathbb{NL}\). Now we suppose that \(\mathcal{E}(0) \le c^2 C^{-2} \nu^\frac{7}{3}\) and \(\mathcal{E}(0) \ne 0\), then we demonstrate \eqref{eq 3.6} by contradiction. Assume that \eqref{eq 3.6} does not hold, that is, there exists \(t_0 \in [0, +\infty)\) such that \(\mathcal{E}(t_0) > 2 \mathcal{E}(0)\). Let
		\[t^* = \inf \{ t \ge 0 | \mathcal{E}(t) = \min \{ 4 \mathcal{E}(0), \mathcal{E}(t_0) \} \} \le t_0 < \infty,\]
		then \(\mathcal{E}(t) \le \min \{ 4 \mathcal{E}(0), \mathcal{E}(t_0) \} \le 4 \mathcal{E}(0)\) for any \(t \in [0,t^*]\) and \(\mathcal{E}(t^*) = \min \{ 4 \mathcal{E}(0), \mathcal{E}(t_0) \} > 2 \mathcal{E}(0)\). However, according to \eqref{eq 3.5},
		\[\mathcal{E}(t^*) \le 2 \mathcal{E}(0) + \left( -4c + C \nu^{-\frac{7}{6}} (4 \mathcal{E}(0))^\frac{1}{2} \right) \mathcal{D}(t) \le 2 \mathcal{E}(0),\]
		which leads to a contradiction.
	\end{proof}
	
	Theorem \ref{thm 1.1} follows directly from Theorem \ref{thm 3.7}.
	
	\section*{Acknowledgements}
	This work was supported by the National Natural Science Foundation of China 12271497. The author would like to thank professor Lifeng Zhao for his guidance.
	
	\bibliographystyle{IEEEtran}
	\bibliography{Stability_of_Poiseuille_Flow}
	
\end{document}